\newcommand{\bbC}{{\mathbb{C}}}
\newcommand{\bbD}{{\mathbb{D}}}
\newcommand{\bbF}{{\mathbb{F}}}
\newcommand{\bbR}{{\mathbb{R}}}
\newcommand{\bbZ}{{\mathbb{Z}}}
\newcommand{\calE}{{\frak{e}}}
\newcommand{\calR}{{\mathcal R}}
\newcommand{\calT}{{\mathcal T}}
\newcommand{\dott}{\,\cdot\,}
\newcommand{\lb}{\label}
\newcommand{\f}{\frac}
\newcommand{\ol}{\overline}
\newcommand{\dist}{\text{\rm{dist}}}
\newcommand{\id}{\text{\rm{id}}}
\newcommand{\ess}{\text{\rm{ess}}}
\newcommand{\ac}{\text{\rm{ac}}}
\newcommand{\s}{\text{\rm{s}}}
\newcommand{\bi}{\bibitem}
\newcommand{\beq}{\begin{equation}}
\newcommand{\eeq}{\end{equation}}
\newcommand{\ba}{\begin{align}}
\newcommand{\ea}{\end{align}}
\newcommand{\veps}{\varepsilon}
\newcounter{smalllist}
\newenvironment{SL}{\begin{list}{{\rm\roman{smalllist})}}{%
\setlength{\topsep}{0mm}\setlength{\parsep}{0mm}\setlength{\itemsep}{0mm}%
\setlength{\labelwidth}{2em}\setlength{\leftmargin}{2em}\usecounter{smalllist}%
}}{\end{list}}
\DeclareMathOperator{\Ima}{Im}
\numberwithin{equation}{section}
\newtheorem{theorem}{Theorem}[section]
\newtheorem*{p2.1}{Proposition 2.1}
\newtheorem{proposition}[theorem]{Proposition}
\newtheorem{corollary}[theorem]{Corollary}
\theoremstyle{definition}
\newtheorem{conjecture}[theorem]{Conjecture}
\theoremstyle{remark}
\newtheorem*{remark}{Remark}
\newtheorem*{remarks}{Remarks}
\newcommand{\abs}[1]{\lvert#1\rvert}
\renewcommand{\MRhref}[2]{\href{http://www.ams.org/mathscinet-getitem?mr=#1}{#2}}
\renewcommand{\MR}[1]{}
\def\@strippedMR{}
\def\@scanforMR#1#2#3\endscan{%
   \ifx#1M\ifx#2R\def\@strippedMR{#3}%
   \else\def\@strippedMR{#1#2#3}%
   \fi\fi}
\renewcommand\MR[1]{\relax\ifhmode\unskip\spacefactor3000 \space\fi
   \@scanforMR#1\endscan
   \MRhref{\@strippedMR}{MR\@strippedMR}}
\begin{document}
\title[Finite Gap Jacobi Matrices: An Announcement]{Finite Gap Jacobi Matrices:\\An Announcement}
\author[J.~S.~Christiansen, B.~Simon, and M.~Zinchenko]{Jacob S.~Christiansen$^*$, Barry Simon$^{*,\dagger}$, and
Maxim Zinchenko$^*$}

\thanks{$^*$ Mathematics 253-37, California Institute of Technology, Pasadena, CA 91125.
E-mail: stordal@caltech.edu; bsimon@caltech.edu; maxim@caltech.edu}
\thanks{$^\dagger$ Supported in part by NSF grants DMS--0140592
and DMS-0652919}

\date{October 22, 2007}
\keywords{Finite gap Jacobi matrices, isospectral torus, Szeg\H{o}'s theorem, Szeg\H{o} asymptotics,
Jost function}
\subjclass[2000]{Primary: 47B36, 42C05. Secondary: 47A10, 30F35}

\begin{abstract} We consider Jacobi matrices whose essential spectrum is a finite union of closed intervals.
We focus on Szeg\H{o}'s theorem, Jost solutions, and Szeg\H{o} asymptotics for this situation. This announcement
describes talks the authors gave at OPSFA 2007.
\end{abstract}

\maketitle

\section{Introduction and Background} \lb{s1}

This paper announces results in the spectral theory of orthogonal polynomials on the real line (OPRL).
We start out with a measure $d\mu$ of compact support on $\bbR$; $P_n(x;d\mu)$ (sometimes we drop $d\mu$) and $p_n(x;d\mu)$ are the monic orthogonal and orthonormal polynomials, and $\{a_n,b_n\}_{n=1}^\infty$
the Jacobi parameters determined by the recursion relations (where $p_{-1}=0$):
\begin{equation} \lb{1.1}
xp_n(x)=a_{n+1} p_{n+1}(x) + b_{n+1} p_n(x) + a_n p_{n-1}(x)
\end{equation}
summarized in a Jacobi matrix
\begin{equation} \lb{1.2}
J=
\begin{pmatrix}
b_1 & a_1 & 0 & 0 & \cdots \\
a_1 & b_2 & a_2 & 0 & \cdots \\
0 & a_2 & b_3 & a_3 & \cdots \\
\vdots & \vdots & \vdots & \vdots & \ddots
\end{pmatrix}
\end{equation}
We will use the Lebesgue decomposition of $d\mu$,
\begin{equation} \lb{1.3}
d\mu(x)=w(x)\, dx + d\mu_\s(x)
\end{equation}
with $d\mu_\s$ singular w.r.t.\ $dx$.

In this introduction, we will also consider orthogonal polynomials
on the unit circle (OPUC) where $d\mu$ is now a measure on
$\partial\bbD =\{e^{i\theta}\mid\theta\in [0,2\pi)\}$;
$\Phi_n(z;d\mu)$ and $\varphi_n(z;d\mu)$ are the monic orthogonal
and orthonormal polynomials, and
\begin{equation} \lb{1.4}
\alpha_n = -\ol{\Phi_{n+1}(0)}
\end{equation}
are the Verblunsky coefficients. \eqref{1.3} is replaced by
\begin{equation} \lb{1.5}
d\mu(\theta) = w(\theta)\, \f{d\theta}{2\pi} + d\mu_\s(\theta)
\end{equation}
We have $\abs{\alpha_n} <1$ and $\rho_n$ is defined by
\begin{equation} \lb{1.6}
\rho_n = (1-\abs{\alpha_n}^2)^{1/2}
\end{equation}
For background on OPRL, see \cite{Szb,Chi,FrB,Rice}, and for OPUC, see \cite{Szb,GBk,OPUC1,OPUC2}.

Our starting point is Szeg\H{o}'s theorem in Verblunsky's form (see Ch.~2 of \cite{OPUC1} for history
and proof):

\begin{theorem}\lb{T1.1} Consider OPUC. The following are equivalent:
\begin{alignat}{2}
&\text{\rm{(a)}} \qquad && \int \log(w(\theta))\, \f{d\theta}{2\pi} >-\infty \lb{1.7} \\
&\text{\rm{(b)}} \qquad && \sum_{n=0}^\infty\, \abs{\alpha_n}^2 <\infty \lb{1.8} \\
&\text{\rm{(c)}} \qquad && \prod_{n=0}^\infty \rho_n >0 \lb{1.9}
\end{alignat}
\end{theorem}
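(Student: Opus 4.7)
The plan is to prove the quantitative identity
\begin{equation*}
\prod_{n=0}^\infty \rho_n^2 = \exp\!\left(\int \log w(\theta)\, \f{d\theta}{2\pi}\right),
\end{equation*}
with the right-hand side interpreted as $0$ when $\log w \notin L^1(d\theta/(2\pi))$. All three equivalences then follow at once. The equivalence (b)$\Leftrightarrow$(c) is immediate from $|\alpha_n|<1$ and the expansion $\log(1-|\alpha_n|^2) = -|\alpha_n|^2 + O(|\alpha_n|^4)$: convergence of $\sum |\alpha_n|^2$ is equivalent to $-\sum \log \rho_n^2 < \infty$, which is precisely $\prod \rho_n >0$.

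The main structural input is the Szeg\H{o} recursion $\Phi_{n+1}(z) = z\Phi_n(z) - \bar\alpha_n \Phi_n^*(z)$ together with its companion for $\Phi_n^*(z) = z^n\overline{\Phi_n(1/\bar z)}$. Using the orthogonality $\int \Phi_n \overline{\Phi_n^*}\, d\mu = 0$, one extracts $\int |\Phi_{n+1}|^2 d\mu = \rho_n^2 \int |\Phi_n|^2 d\mu$, hence $\int |\Phi_n|^2 d\mu = \prod_{j=0}^{n-1} \rho_j^2$. The easy half of the identity now follows by Jensen: all zeros of $\Phi_n$ lie in $\overline{\bbD}$, so $\int \log|\Phi_n(e^{i\theta})|^2 \f{d\theta}{2\pi} \geq 0$ by Jensen's formula, and the AM/GM inequality for integrals (applied with probability measure $d\theta/(2\pi)$) yields
\begin{equation*}
\prod_{j=0}^{n-1}\rho_j^2 = \int |\Phi_n|^2 d\mu \geq \int |\Phi_n|^2 w\, \f{d\theta}{2\pi} \geq \exp\!\left(\int \log w\, \f{d\theta}{2\pi}\right).
\end{equation*}
Letting $n\to\infty$ proves (a)$\Rightarrow$(c).

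The matching upper bound is the substantive part, and the plan is Bernstein--Szeg\H{o} approximation. Let $d\mu_N$ be the measure whose Verblunsky coefficients are $\alpha_j$ for $j<N$ and $0$ for $j\geq N$; this is purely absolutely continuous with rational weight $w_N = 1/|\varphi_N^*|^2$. A direct computation using that $\varphi_N^*$ is outer on $\bbD$ (all its zeros lie outside $\overline{\bbD}$) together with Jensen's formula at the origin gives $\int \log w_N\, d\theta/(2\pi) = \sum_{j=0}^{N-1}\log \rho_j^2$. One then passes to $N\to\infty$ using weak-$*$ convergence $d\mu_N \to d\mu$ (continuity of the Verblunsky coefficient-to-measure map) together with upper semicontinuity of the Szeg\H{o} entropy functional $d\nu \mapsto \int \log w_\nu\, d\theta/(2\pi)$, obtaining $\int \log w\, d\theta/(2\pi) \geq \log \prod_n \rho_n^2$. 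Establishing this upper semicontinuity is the main obstacle: it combines concavity of $\log$ with delicate control on how the a.c.\ part can concentrate under weak-$*$ limits, and it is genuinely where the depth of Szeg\H{o}'s theorem lies.
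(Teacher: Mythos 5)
The paper offers no proof of Theorem~\ref{T1.1}: it is quoted as classical background with a pointer to Ch.~2 of \cite{OPUC1}, so the only comparison available is with the standard argument given there---and your plan is essentially that standard proof. The outline is correct: (b)$\Leftrightarrow$(c) is trivial; the norm identity $\int\abs{\Phi_n}^2\,d\mu=\prod_{j=0}^{n-1}\rho_j^2$ plus Jensen/AM--GM gives the easy inequality $\prod_j\rho_j^2\ge\exp\bigl(\int\log w\,\f{d\theta}{2\pi}\bigr)$; and Bernstein--Szeg\H{o} approximation together with weak-$*$ upper semicontinuity of the entropy gives the reverse, hence the full Szeg\H{o}--Verblunsky identity with the singular part irrelevant. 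Two remarks. First, the orthogonality you invoke, $\int\Phi_n\ol{\Phi_n^*}\,d\mu=0$, is false in general (that integral equals $\Phi_n(0)\int\abs{\Phi_n}^2\,d\mu$); what the computation actually uses is $\Phi_{n+1}\perp\Phi_n^*$, valid because $\deg\Phi_n^*\le n$, which yields $\int\abs{\Phi_{n+1}}^2\,d\mu=\rho_n^2\int\abs{\Phi_n}^2\,d\mu$---a harmless slip. Second, the entropy upper semicontinuity is asserted rather than proved; it is a genuine but standard lemma, obtained in one stroke from the representation $\int\log w_\nu\,\f{d\theta}{2\pi}=\inf_{g}\bigl[\int\log g\,\f{d\theta}{2\pi}+\int g^{-1}\,d\nu-1\bigr]$, the infimum over positive continuous $g$ (using $\log x\le x-1$), which exhibits the entropy as an infimum of weak-$*$ continuous functionals of $\nu$. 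So it is less ``delicate'' than you suggest, but as written that step remains a quoted black box; since it is a known result, this does not constitute a gap in the logic.
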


Of course, (b) $\Leftrightarrow$ (c) is trivial and (c) is not normally included. We include it
because for OPRL, (a) $\Leftrightarrow$ (c) and (a) $\Leftrightarrow$ (b) have different analogs.
The analog of (a) $\Leftrightarrow$ (c) for OPRL on $[-2,2]$, which we will call Szeg\H{o}'s
theorem for $[-2,2]$, is:

\begin{theorem}\lb{T1.2} Let $J$ be a Jacobi matrix with $\sigma_\ess (J)=[-2,2]$ and eigenvalues
$\{E_j\}_{j=1}^N$ in $\sigma(J)\setminus [-2,2]$.
Suppose that
\begin{equation} \lb{1.10}
\sum_{j=1}^N (\abs{E_j}-2)^{1/2} <\infty
\end{equation}
Then the following are equivalent:
\begin{alignat}{2}
&\text{\rm{(i)}} \qquad && \int_{-2}^2 (4-x^2)^{-1/2} \log(w(x))\, dx > -\infty \lb{1.11} \\
&\text{\rm{(ii)}} \qquad && \limsup a_1 \dots a_n >0 \lb{1.12}
\end{alignat}
If these hold, then
\begin{equation} \lb{1.13}
\lim_{n\to\infty}\, a_1 \dots a_n
\end{equation}
exists in $(0,\infty)$.
\end{theorem}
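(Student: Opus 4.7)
The plan is to apply the Szeg\H{o} map $x = z + z^{-1}$ (a conformal bijection from $\bbD$ onto $\bbC\setminus[-2,2]$, with $\infty$ added) to reduce Theorem~\ref{T1.2} to Theorem~\ref{T1.1}, supplemented by a Blaschke/Jost analysis that absorbs the outlier eigenvalues.

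To begin, I would associate to $d\mu$ a symmetric measure $d\tilde\mu$ on $\partial\bbD$ by pulling back $d\mu\big|_{[-2,2]}$ along $x = 2\cos\theta$. A direct change of variables shows that, up to a finite additive constant (coming from the integral of $\log|\sin\theta|$ over a period), the left-hand side of \eqref{1.11} equals $\int \log \tilde w(\theta)\,\frac{d\theta}{2\pi}$. Condition (i) is therefore equivalent to Verblunsky's Szeg\H{o} condition \eqref{1.7} for $d\tilde\mu$, which by Theorem~\ref{T1.1} is equivalent to $\prod_n \rho_n^{(\tilde\mu)} > 0$.

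Each eigenvalue $E_j \in \sigma(J)\setminus[-2,2]$ corresponds via $E_j = z_j + z_j^{-1}$ to a real $z_j \in (-1,1)\setminus\{0\}$, and a computation gives $|E_j| - 2 = (1-|z_j|)^2/|z_j|$, so hypothesis \eqref{1.10} is exactly the Blaschke condition $\sum_j (1-|z_j|) < \infty$ for eigenvalues accumulating at $\pm 2$. The Blaschke product $B(z) = \prod_j b_{z_j}(z)$ therefore converges in $\bbD$ with $B(0) \neq 0$. To transport the OPUC conclusion $\prod \rho_n^{(\tilde\mu)} > 0$ back to $\limsup a_1\cdots a_n > 0$, I would work with the Jost function $u(z)$ of $J$: the function analytic on $\bbD$, vanishing exactly on $\{z_j\}$, whose boundary values satisfy $|u(e^{i\theta})|^2 \propto w(2\cos\theta)|\sin\theta|$. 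The quotient $S(z) := u(z)/B(z)$ is a non-vanishing outer function on $\bbD$ whose existence is equivalent to (i), and Geronimus-type identities for the OPUC--OPRL correspondence yield an identity of the schematic form $\lim_n a_1\cdots a_n = c\, S(0) B(0)$ for an explicit constant $c$. This simultaneously delivers (i) $\Leftrightarrow$ (ii) and the existence of the limit \eqref{1.13} in $(0,\infty)$.

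The main obstacle will be controlling the OPUC-to-OPRL passage in the presence of outlier eigenvalues. Absent them, the identification of $\lim a_1\cdots a_n$ with a boundary Szeg\H{o} integral is classical Shohat--Nevai--Totik material; inserting the Blaschke factor and upgrading the $\limsup$ in (ii) to a genuine $\lim$ requires either a step-by-step sum rule in the spirit of Killip--Simon or a lower-semicontinuity argument for the Szeg\H{o} function under appropriate finite-rank perturbations. Since hypothesis \eqref{1.10} sits precisely at the Blaschke threshold and offers no slack for stronger control on the $\{z_j\}$, I expect this step to consume most of the technical effort.
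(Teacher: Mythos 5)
Your overall route---pulling the problem back to $\bbD$ via $x=z+z^{-1}$, converting \eqref{1.10} into the Blaschke condition $\sum_j(1-\abs{z_j})<\infty$ (your computation $\abs{E_j}-2=(1-\abs{z_j})^2/\abs{z_j}$ is right), and expressing $\lim a_1\cdots a_n$ through a Szeg\H{o}/Jost function carrying a Blaschke factor for the bound states---is exactly the approach behind the proof this paper points to (Sect.~13.8 of \cite{OPUC2}, going back to \cite{KS,PYpams}), and it is what Sections~\ref{s3}--\ref{s5} generalize to finite gap sets. But as written there are two genuine gaps. First, the reduction to Theorem~\ref{T1.1}: you pull back only the restriction of $d\mu$ to $[-2,2]$ and invoke Verblunsky's theorem for that circle measure. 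The resulting statement $\prod_n\rho_n^{(\tilde\mu)}>0$ concerns the Verblunsky coefficients of the restricted (renormalized) measure, and the Geronimus relations tie those to the Jacobi parameters of the Jacobi matrix \emph{of that restricted measure}, not of $J$; the Szeg\H{o} map works on the level of measures only when $d\mu$ is supported in $[-2,2]$. This is precisely the point emphasized in Section~\ref{s3}: when there are mass points off $[-2,2]$, one must drag the $m$-function, not the measure, to the disk. So this step contributes nothing toward \eqref{1.12}, and the detour through Theorem~\ref{T1.1} does no work in your argument.

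Second, the identity you invoke ``of the schematic form $\lim_n a_1\cdots a_n=c\,S(0)B(0)$'' is not delivered by Geronimus-type identities; it is the Case $C_0$ sum rule with eigenvalue terms, and establishing it---in both directions, i.e., also that \eqref{1.12} together with \eqref{1.10} forces \eqref{1.11}, and that the limit \eqref{1.13} exists---\emph{is} the content of the theorem. Note also that a function ``analytic on $\bbD$, vanishing exactly on $\{z_j\}$, with $\abs{u(e^{i\theta})}^2\propto w(2\cos\theta)\abs{\sin\theta}$'' is by construction just $B\cdot S$; the nontrivial step is relating such an object to $J$, which is done either by step-by-step sum rules plus semicontinuity of the entropy or by a canonical (Poisson--Jensen type) factorization of the meromorphic Herglotz function $M(z)=-m(z+z^{-1})$ as in \cite{S288}, the very statement generalized here as Theorem~\ref{T4.1}. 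You flag this as ``the main technical effort,'' but without it the proposal essentially restates the theorem. Finally, a sign check: specializing \eqref{2.3a}, \eqref{2.5}, and \eqref{3.9} to $\calE=[-2,2]$ gives $\lim a_1\cdots a_n = u(0;J_\infty)/u(0;J)$ with $u(0;J)$ containing $\prod_j\abs{z_j}$, so the eigenvalue contribution to $\lim a_1\cdots a_n$ is $\prod_j\abs{z_j}^{-1}\geq 1$; your formula with $S(0)B(0)=u(0)$ puts the Blaschke factor on the wrong side.
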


\begin{remarks} 1. For a proof and history, see Sect.~13.8 of \cite{OPUC2}\

\smallskip

2. The number of eigenvalues, $N$, can be zero, finite, or infinite.

\smallskip

3. There are also results that imply \eqref{1.10}. For example, if
\eqref{1.11} holds, and the $\limsup$ in \eqref{1.12} is finite,
then \eqref{1.10} holds.

\smallskip
4. \eqref{1.12} involves $\limsup$, not $\liminf$; its converse is
that $a_1\dots a_n\to 0$.
\end{remarks}

The analog of (a) $\Leftrightarrow$ (b) is the following result of Killip--Simon \cite{KS}:

\begin{theorem}\lb{T1.3} Let $J$ be a Jacobi matrix with $\sigma_\ess (J)=[-2,2]$ and
eigenvalues $\{E_j\}_{j=1}^N$ in $\sigma(J)\setminus [-2,2]$. Then
\begin{equation} \lb{1.14}
\sum_{n=1}^\infty b_n^2 + (a_n-1)^2 <\infty
\end{equation}
if and only if the following both hold:
\begin{alignat}{2}
&\text{\rm{(i)}} \qquad && \sum_{j=1}^N (\abs{E_j}-2)^{3/2} <\infty \lb{1.15} \\
&\text{\rm{(ii)}} \qquad && \int_{-2}^2 (4-x^2)^{1/2} \log(w(x))\, dx > \infty \lb{1.16}
\end{alignat}
\end{theorem}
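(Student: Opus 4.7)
The plan is to prove the theorem by establishing a \emph{sum rule} — a trace-like identity that relates the coefficient sum in \eqref{1.14} to the two spectral quantities in \eqref{1.15}, \eqref{1.16} — and then extracting each implication by soft functional-analytic arguments (continuity, semicontinuity) applied to this identity. Throughout, let $J_0$ be the free Jacobi matrix with $a_n\equiv 1$, $b_n\equiv 0$, whose spectrum is exactly $[-2,2]$, and uniformize the cut plane $\bbC\setminus[-2,2]$ by $z=w+w^{-1}$ with $\abs{w}<1$.

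First I would derive the \textbf{$P_2$ sum rule}. Define the perturbation determinant
\[
L(w) \;=\; \log\det\bigl((J-z(w))(J_0-z(w))^{-1}\bigr)
\]
which, under mild hypotheses on $J-J_0$, is analytic in $\abs{w}<1$ with zeros exactly at the preimages of the discrete eigenvalues $E_j$. Expand $\log L$ at $w=0$; the Taylor coefficients are computable from the resolvent series and give (up to constants)
\[
\tfrac12\sum_n b_n^2 + \sum_n G(a_n),\qquad G(a)=a^2-1-\log a^2,
\]
which, since $G(a)\asymp (a-1)^2$ near $1$, is comparable to the left side of \eqref{1.14}. Computing the same expansion from the boundary values of $L$ on $\partial\bbD$ and the zero set produces the spectral expression: a weighted Szeg\H{o} integral comparable to $-\int_{-2}^2(4-x^2)^{1/2}\log w(x)\,dx$ plus $\sum_j F(E_j)$ with $F(E)\asymp (\abs{E}-2)^{3/2}$. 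The identity should be proved first for a dense class of \emph{nice} $J$'s — say, finite-range perturbations of $J_0$ — where all convergence and contour-shift arguments are automatic.

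Next I would promote the identity to a pair of one-sided inequalities valid for \emph{every} Jacobi matrix with $\sigma_\ess(J)=[-2,2]$, by taking limits of finite-range truncations $J^{(N)}$ (the matrix $J$ with $a_n$ reset to $1$ and $b_n$ reset to $0$ for $n>N$). The coefficient side is obviously continuous in $N$; the spectral side must be handled with \textbf{upper semicontinuity of the entropy integral} $\int(4-x^2)^{1/2}\log w\,dx$ under weak-$*$ limits of spectral measures (an analog of the Szeg\H{o}–Verblunsky semicontinuity used in \cite{KS}) and with continuity of the eigenvalue sum $\sum F(E_j)$ under norm-resolvent limits where the essential spectrum is preserved. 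This produces
\[
\tfrac12\sum_n b_n^2 + \sum_n G(a_n) \;\ge\; \calE_{\text{spec}}(J),
\]
and the matching $\le$ for finite-range $J$'s. A \emph{step-by-step} version of the sum rule — applied to $J$ and to $J$ with its first row and column removed — then yields the opposite inequality in the limit and the full identity. Both implications of Theorem~\ref{T1.3} follow at once: $(ii)+(i)\Rightarrow\eqref{1.14}$ uses the $\le$ direction, and $\eqref{1.14}\Rightarrow(i)+(ii)$ uses the $\ge$ direction together with the fact that each of the two nonnegative spectral summands is controlled separately.

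The principal obstacle is the \textbf{semicontinuity of the quasi-Szeg\H{o} integral} $\int(4-x^2)^{1/2}\log w(x)\,dx$ under approximation of $J$ by $J^{(N)}$: this is not a standard Szeg\H{o}-type integral (the weight vanishes at $\pm 2$ like $(4-x^2)^{1/2}$ rather than blowing up like $(4-x^2)^{-1/2}$), so the classical Jensen-inequality argument must be replaced by an entropy calculation involving the second Taylor coefficient of $L$. The delicate point is that lower bounds on $\sum F(E_j)$ require controlling how eigenvalues move out of $[-2,2]$ as $N\to\infty$, which is where the $3/2$ power and the specific form of $G(a)=a^2-1-\log a^2$ (vanishing quadratically at $1$ but blowing up at $0$) become essential for matching the rates on both sides.
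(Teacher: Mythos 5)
This theorem is not proved in the paper at all: it is quoted as background, being precisely the Killip--Simon theorem of \cite{KS}, and your sketch reproduces essentially the original Killip--Simon argument (the $P_2$ sum rule with $G(a)=a^2-1-\log a^2$ and an eigenvalue term $F(E)\asymp(\abs{E}-2)^{3/2}$, upper semicontinuity of the quasi-Szeg\H{o} entropy term, and a step-by-step sum rule to get matching one-sided inequalities). So the approach is the correct and standard one for this statement; the only slip worth noting is that $L(w)$ is already the logarithm of the perturbation determinant, so one expands $L$, not $\log L$, at $w=0$.
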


The last two theorems involve perturbations of the Jacobi matrix with $b_n\equiv 0$, $a_n\equiv 1$,
essentially up to scaling and translation, constant $b_n,a_n$. The next simplest situation is perturbations
of periodic Jacobi matrices, that is, $J_0$ has Jacobi parameters $\{a_n^{(0)}, b_n^{(0)}\}_{n=1}^\infty$
obeying
\begin{equation} \lb{1.17}
a_{n+p}^{(0)} =a_n^{(0)} \qquad b_{n+p}^{(0)} = b_n^{(0)}
\end{equation}
for some fixed $p$ and all $n=1,2,\dots$. In that case, we have a set
\[
\calE=\bigcup_{j=1}^{\ell+1} \calE_j
\]
where $\{\calE_j\}_{j=1}^{\ell+1}$ are $\ell+1$ disjoint closed
intervals
\begin{gather*}
\calE_j = [\alpha_j,\beta_j] \\
\alpha_1 < \beta_1 < \alpha_2 < \beta_2 < \cdots < \alpha_{\ell+1} < \beta_{\ell+1}
\end{gather*}
with $\ell$ gaps $(\beta_1,\alpha_2), \dots, (\beta_\ell,\alpha_{\ell+1})$, and
\begin{equation} \lb{1.18}
\sigma_\ess(J_0)=\calE
\end{equation}

We always have $\ell+1\leq p$ and generically $\ell+1=p$. In this
generic case, we say ``all gaps are open." We use $\ell$, the number
of gaps, because $J_0$ is not the only periodic Jacobi matrix
obeying \eqref{1.18}---there is an $\ell$-dimensional manifold,
$\calT_\calE$, of periodic $J_0$'s obeying \eqref{1.18}. Indeed, the
collection of all $\{a_j^{(0)}, b_j^{(0)}\}_{j=1}^p \subset
[(0,\infty)\times\bbR]^p$ obeying \eqref{1.18} for fixed $\calE$ is
an $\ell$-dimensional torus, so $\calT_\calE$ is called the
isospectral torus; see \cite[Chap.~5]{Rice}. That the key to
extending Theorems~\ref{T1.2} and \ref{T1.3} to the periodic case is
an approach to an isospectral torus is an idea of Simon
\cite{OPUC2}.

Damanik, Killip, and Simon \cite{DKS2007} have proven the following analogs of Theorems~\ref{T1.2} and
\ref{T1.3}:
\begin{theorem}\lb{T1.4} Let $\calE$ be the essential spectrum of a periodic $J_0$ and let $J$ be a Jacobi matrix with
\[
\sigma_\ess (J)=\calE
\]
Let $\{E_j\}_{j=1}^N$ be the eigenvalues of $J$ in
$\sigma(J)\setminus \calE$. Suppose that
\begin{equation} \lb{1.19}
\sum_{j=1}^N \dist (E_j,\calE)^{1/2} <\infty
\end{equation}
Then the following are equivalent:
\begin{alignat}{2}
&\text{\rm{(i)}}  \qquad && \int_\calE \dist(x,\bbR\setminus \calE)^{-1/2} \log(w(x))\,dx >-\infty \lb{1.20} \\
&\text{\rm{(ii)}} \qquad && \limsup\, \f{a_1\dots a_n}{C(\calE)^n}
>0  \lb{1.21}
\end{alignat}
\end{theorem}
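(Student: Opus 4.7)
The plan is to set up a sum rule on the hyperelliptic Riemann surface of genus $\ell$ associated to $\calE$, reducing \eqref{1.20} $\Leftrightarrow$ \eqref{1.21} to asymptotics of a Jost function, and then comparing with a reference point in the isospectral torus $\calT_\calE$.

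First, I would fix a reference element $J_0 \in \calT_\calE$ with $p$-periodic Jacobi parameters $\{a_n^{(0)},b_n^{(0)}\}$. A standard discriminant computation for periodic Jacobi matrices yields $\prod_{j=1}^p a_j^{(0)} = C(\calE)^p$, where $C(\calE)$ is the logarithmic capacity of $\calE$. Hence $(a_1^{(0)}\cdots a_n^{(0)})/C(\calE)^n$ is $p$-periodic in $n$ and bounded above and away from $0$, so \eqref{1.21} is equivalent to
\[
\limsup_{n\to\infty} \f{a_1\cdots a_n}{a_1^{(0)}\cdots a_n^{(0)}} > 0.
\]
Moreover, the equilibrium measure $d\omega_\calE$ of $\calE$ has density proportional on the interior of each $\calE_j$ to $\dist(x,\bbR\setminus\calE)^{-1/2}$ (times a smooth factor that is strictly positive since $J_0$ is a periodic Jacobi matrix), so \eqref{1.20} is equivalent to $\int_\calE \log w(x)\, d\omega_\calE(x) > -\infty$, and in turn to the finiteness of the relative Szeg\H{o} integral $\int_\calE \log(w/w_0)\, d\omega_\calE$, since $\int_\calE |\log w_0|\, d\omega_\calE < \infty$.

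Second, the key analytic object is a Jost function $u(\dott\,;J)$ on $\calS \setminus \pi^{-1}(\calE)$, where $\pi\colon\calS\to\bbC\cup\{\infty\}$ is the hyperelliptic projection of the genus-$\ell$ surface $y^2 = \prod_{j=1}^{\ell+1}(z-\alpha_j)(z-\beta_j)$. One demands that $u$ be character-automorphic on the universal cover of $\bbC\cup\{\infty\}\setminus\calE$, have simple zeros on the upper sheet exactly at the eigenvalues $\{E_j\}$, and have boundary values $|u_+(x)|^2$ proportional to $w(x)/w_0(x)$ a.e.\ on $\calE$. Its expansion at the point over $z=\infty$ encodes the ratio $(a_1\cdots a_n)/(a_1^{(0)}\cdots a_n^{(0)})$ after renormalization by the transfer matrix of $J_0$. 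A Blaschke-type factorization of $u$ on the universal cover, evaluated at the point over $\infty$, then yields a sum rule of the schematic form
\[
\log\f{a_1\cdots a_n}{a_1^{(0)}\cdots a_n^{(0)}} = \f{1}{2}\int_\calE \log\bigl(\f{w(x)}{w_0(x)}\bigr)\, d\omega_\calE(x) + \sum_j G_\calE(E_j) + o(1),
\]
with $G_\calE$ the Green's function of $\bbC\setminus\calE$ with pole at $\infty$. Since $G_\calE(E)$ is comparable to $\dist(E,\calE)^{1/2}$ near $\calE$, hypothesis \eqref{1.19} guarantees convergence of the eigenvalue sum, and combining this with Step~1 yields \eqref{1.20} $\Leftrightarrow$ \eqref{1.21}.

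The main obstacle is the construction and rigorous control of the Jost function in this multiply-connected, positive-genus setting: one must work with character-automorphic functions on the universal cover (a disk model of $\bbC\cup\{\infty\}\setminus\calE$) and track the multiplicative characters associated with loops around the $\ell$ gaps, rather than with single-valued Blaschke products as in the genus-zero case of Theorem~\ref{T1.2}. A secondary, structurally important point is that the limit in \eqref{1.21} need not exist: the shifted matrices $J^{(n)}$ generically approach a nontrivial orbit in $\calT_\calE$ rather than a single point, which is exactly why the statement uses $\limsup$ rather than $\lim$ as in \eqref{1.13}.
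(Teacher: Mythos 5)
You should first be clear that this paper does not prove Theorem~\ref{T1.4} at all: it is quoted from Damanik--Killip--Simon \cite{DKS2007}, and the machinery of Sections~3--5 is aimed at the strictly more general Theorem~\ref{T2.1}. Your plan (universal cover of $(\bbC\setminus\calE)\cup\{\infty\}$, a character-automorphic Jost function with zeros at the $E_j$ and boundary modulus tied to $w$, a Blaschke-type factorization, and the comparisons $G_\calE(E)\sim\dist(E,\calE)^{1/2}$, $\rho_\calE\sim\dist(\cdot,\bbR\setminus\calE)^{-1/2}$) therefore belongs to the same family as the authors' approach to Theorem~\ref{T2.1} and to Peherstorfer--Yuditskii \cite{PY}, not to the periodic-specific route of \cite{DKS2007}; that is legitimate, but then you must supply exactly what you defer as ``the main obstacle.'' In the paper the corresponding analytic core is Theorem~\ref{T4.1}: a canonical factorization of $M(z)=-m(x(z))$ on the disk, whose proof needs convergence of the $\Gamma$-Blaschke products (Burnside/Beardon, Corollary~\ref{C3.3}) and, crucially, a way to handle the fact that $\arg M$ is unbounded on the universal cover (it grows like the word length of the group element), plus a separate treatment of eigenvalues in the gaps. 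None of this is sketched, so as it stands your proposal is an outline of a known strategy rather than a proof.

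There is also a concrete error in your schematic sum rule. With an arbitrary reference $J_0\in\calT_\calE$, the identity $\log\bigl[(a_1\cdots a_n)/(a_1^{(0)}\cdots a_n^{(0)})\bigr]=\tfrac12\int_\calE\log(w/w_0)\,d\omega_\calE+\sum_j G_\calE(E_j)+o(1)$ cannot hold: the left-hand side is only asymptotically ($p$-)periodic, because $J^{(n)}$ approaches the orbit of some $J_\infty\in\calT_\calE$ whose partial products generally differ from those of $J_0$; this is exactly the content of \eqref{2.5}, where the normalization is along $J_\infty$, not an arbitrary torus point. Your own closing remark (that the $\limsup$ in \eqref{1.21} is unavoidable) contradicts the $o(1)$ in your displayed identity. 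The repair --- and what the actual proofs do --- is either to run the sum rule step-by-step with two-sided semicontinuity estimates, so that only inequalities between $\limsup$'s are compared (which suffices for \eqref{1.20}$\Leftrightarrow$\eqref{1.21}), or to identify the correct comparison point $J_\infty$ through the character of the Jost function as in Section~6. Relatedly, the direction \eqref{1.21}$\Rightarrow$\eqref{1.20} (the half the paper emphasizes is not accessible by Widom/Peherstorfer--Yuditskii methods in the general setting) requires the entropy half of the sum rule, including control of $\int_\calE\log_+ w\,d\rho_\calE$; your plan treats both implications as consequences of one exact identity, and that is precisely where it would break.
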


\begin{remarks} 1. In \eqref{1.21}, $C(\calE)$ is the logarithmic capacity of $\calE$; see \cite{Land,Ran,EqMC} for
a discussion of potential theory.

\smallskip
2. Damanik--Killip--Simon \cite{DKS2007} do not use \eqref{1.21} but instead
\[
\limsup\, \f{a_1\dots a_n}{a_1^{(0)}\dots a_n^{(0)}} >0
\]
Since $a_1^{(0)} \dots a_p^{(0)}=C(\calE)^p$, this is equivalent.
\end{remarks}

\begin{theorem}\lb{T1.5} Let $J_0$ be a periodic Jacobi matrix with all gaps open and essential spectrum
$\calE$. Let $J$ be a Jacobi matrix with
\[
\sigma_\ess (J)=\calE
\]
Let $\{E_j\}_{j=1}^N$ be the eigenvalues of $J$ in
$\sigma(J)\setminus \calE$. Define
\begin{equation} \lb{1.22}
d_m(\{a_n,b_n\}_{n=1}^\infty, \{a'_n,b'_n\}_{n=1}^\infty) =
\sum_{j=0}^\infty e^{-j} [\abs{a_{m+j}-a'_{m+j}} + \abs{b_{m+j}-b'_{m+j}}]
\end{equation}
and
\begin{equation} \lb{1.23}
d_m (\{a_n,b_n\},\calT_\calE) =\min_{(a',b')\subset\calT_\calE}\,
d_m (\{a_n,b_n\}, \{a'_n,b'_n\})
\end{equation}
Then
\[
\sum_{m=1}^\infty d_m (\{a_n,b_n\},\calT_\calE)^2 <\infty
\]
if and only if
\begin{alignat}{2}
&\text{\rm{(i)}} \qquad && \sum_{j=1}^N \dist (E_j,\calE)^{3/2} <\infty \lb{1.24} \\
&\text{\rm{(ii)}} \qquad && \int_\calE \dist(x, \bbR\setminus
\calE)^{1/2} \log(w(x))\,dx >-\infty \lb{1.25}
\end{alignat}
\end{theorem}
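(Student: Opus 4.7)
The plan is to reduce Theorem~\ref{T1.5} to the Killip--Simon theorem (Theorem~\ref{T1.3}) by ``linearizing'' the spectrum via the \emph{discriminant} $\Delta_p$ of $J_0$, a polynomial of degree $p$ with $\calE = \Delta_p^{-1}([-2,2])$. Since all gaps are open, the critical values $\pm 2$ are attained simply, so near any band edge $\beta$ of $\calE$, $\Delta_p(x) \mp 2$ has a simple zero. Consequently $\dist(E_j,\calE) \asymp |\Delta_p(E_j)|-2$ for $E_j$ close to $\calE$, and $(4-\Delta_p(x)^2)^{1/2} \asymp \dist(x,\bbR\setminus\calE)^{1/2}$ on $\calE$. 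This ensures the spectral conditions \eqref{1.24} and \eqref{1.25} will transform correctly into their $[-2,2]$ analogs under $x \mapsto \Delta_p(x)$, modulo the multi-valuedness addressed below.

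The central tool is the \emph{magic formula}: under the identification $\ell^2(\bbZ_+) \cong \bbC^p \otimes \ell^2(\bbZ_+)$, the $(2p+1)$-diagonal operator $\Delta_p(J)$ becomes a $p\times p$ matrix-valued tridiagonal (block Jacobi) operator $\widetilde J$ with matrix entries $\widetilde A_n, \widetilde B_n$. Its content is that $J \in \calT_\calE$ if and only if $\widetilde A_n = \bdone$ and $\widetilde B_n = 0$ for all $n$, i.e., $\widetilde J$ is the free matrix-valued Jacobi matrix. For general $J$, each block is a polynomial in a length-$2p$ window of Jacobi parameters of $J$ and equals $(\bdone,0)$ precisely when that window is consistent with some element of $\calT_\calE$.

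The next ingredient is a matrix-valued analog of Theorem~\ref{T1.3}: for a block Jacobi matrix $\widetilde J$ with $\sigma_\ess(\widetilde J)=[-2,2]$, one has $\sum_n \|\widetilde A_n - \bdone\|_{HS}^2 + \|\widetilde B_n\|_{HS}^2 < \infty$ if and only if the matrix-valued Szeg\H{o} integral $\int_{-2}^{2}(4-y^2)^{1/2}\log\det W(y)\,dy > -\infty$ converges and $\sum_j (|\widetilde E_j|-2)^{3/2}<\infty$. I would prove this by the Case sum rule strategy of Killip--Simon adapted to matrix OPRL: matrix $m$-functions, step-by-step sum rules, and lower-semicontinuity of the matrix entropy. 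Applied to $\widetilde J = \Delta_p(J)$ together with the push-forward formula $\log\det W(y) = \log\bigl(\prod_{x\in\Delta_p^{-1}(y)} w(x)/|\Delta_p'(x)|\bigr)$, this yields the equivalence of \eqref{1.24}--\eqref{1.25} with the matrix-valued $\ell^2$ condition on $\widetilde J$.

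It remains to translate this $\ell^2$ condition back to the distance condition on $\calT_\calE$. Since each block $(\widetilde A_n,\widetilde B_n)$ is a smooth function of a window of $2p$ consecutive Jacobi parameters, vanishing exactly on $\calT_\calE$ restricted to that window, Lipschitz bounds together with a non-degeneracy argument for the differential transverse to $\calT_\calE$ identify $\sum_n \|\widetilde A_n - \bdone\|^2 + \|\widetilde B_n\|^2$ with $\sum_m d_m(\{a_n,b_n\},\calT_\calE)^2$; the exponential weighting in \eqref{1.22} is designed precisely to absorb the shift in window position and the non-uniqueness of the nearest torus element. I expect the two main obstacles to be the matrix-valued Killip--Simon theorem of the third step and the transversality/non-degeneracy analysis of $\calT_\calE$ as an $\ell$-dimensional submanifold of $[(0,\infty)\times\bbR]^p$ underpinning the fourth step.
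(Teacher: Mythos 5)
This paper is only an announcement and contains no proof of Theorem~\ref{T1.5}: the result is quoted from Damanik--Killip--Simon \cite{DKS2007}. Your outline --- the magic formula turning $\Delta_p(J)$ into a block Jacobi matrix whose deviation from the free one vanishes exactly on (windows consistent with) $\calT_\calE$, a matrix-valued Killip--Simon theorem established via sum rules, and the identification of the Hilbert--Schmidt block condition with $\sum_m d_m(\{a_n,b_n\},\calT_\calE)^2$ --- is precisely the strategy of that cited paper, so your proposal matches the intended proof in approach, and the two steps you single out as the main obstacles (the matrix Killip--Simon theorem and the transversality analysis of $\calT_\calE$) are indeed where the bulk of the work in \cite{DKS2007} lies.
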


While these last two theorems are fairly complete from the point of
view of perturbations of periodic Jacobi matrices, they are
incomplete from the point of view of sets $\calE$. By harmonic
measure on $\calE$, we mean the potential theoretic equilibrium
measure. It is known (Aptekarev \cite{Apt}; see also
\cite{Pe93,Totik01,Rice}) that
\begin{SL}
\item[(i)] $\calE$ is the essential spectrum of a periodic Jacobi matrix if and only if the harmonic measure
of each $\calE_j$ is rational. Theorem~\ref{T1.4} is limited to this
case.
\item[(ii)] All gaps are open if and only if each $\calE_j$ has harmonic measure $1/p$. Theorem~\ref{T1.5}
is limited to this case.
\end{SL}

Our major focus in this work is what happens for a general finite
gap set $\calE$ in which the harmonic measures are not necessarily
rational. This is an announcement. We plan at least two fuller
papers: one \cite{CSZ1} on the structure of the isospectral torus
and one \cite{CSZ2} on Szeg\H{o}'s theorem.

\section{Main Results} \lb{s2}

There are two main results in \cite{CSZ2}. The following is partly new:

\begin{theorem}\lb{T2.1} Suppose $\calE$ is an arbitrary finite gap set
\begin{gather*}
\calE = \bigcup_{j=1}^{\ell+1}\, [\alpha_j,\beta_j] \\
\alpha_1 < \beta_1 < \alpha_2 < \cdots < \beta_{\ell+1}
\end{gather*}
Let $J$ be a Jacobi matrix with
\begin{equation} \lb{2.1a}
\sigma_\ess (J)=\calE
\end{equation}
and let $\{E_j\}_{j=1}^N$ be the eigenvalues of $J$ in
$\sigma(J)\setminus \calE$. Suppose that
\begin{equation} \lb{2.1}
\sum_{j=1}^N \dist (E_j,\calE)^{1/2} <\infty
\end{equation}
Then the following are equivalent:
\begin{alignat}{2}
&\text{\rm{(i)}}  \qquad && \int_\calE \dist(x,\bbR\setminus \calE)^{-1/2} \log(w(x))\,dx >-\infty \lb{2.2} \\
&\text{\rm{(ii)}} \qquad && \limsup\, \f{a_1\dots a_n}{C(\calE)^n}
>0  \lb{2.3}
\end{alignat}
\end{theorem}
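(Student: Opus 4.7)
The plan is to extend the Damanik--Killip--Simon proof of Theorem~\ref{T1.4} from the periodic case to arbitrary finite gap $\calE$ by replacing periodic tools (the Magic Formula, the $p$-fold unwinding) with potential theory on the universal cover of $\Omega := (\bbC\cup\{\infty\})\setminus\calE$. Let $\mathbf{x}:\bbD\to\Omega$ be the universal covering with $\mathbf{x}(0)=\infty$, and let $\Gamma$ be the Fuchsian deck group. In the periodic setting, each harmonic measure $\omega(\calE_j)$ is rational, so characters $\chi:\Gamma\to\partial\bbD$ are torsion and one reduces to single-valued objects on a finite cover; for general finite gap $\calE$ this reduction is unavailable, and Jost-type objects are only character automorphic. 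The decisive general input is that $\Gamma$ is of Widom type for any finite gap $\calE$, so character-automorphic Blaschke products and Green's-type functions converge under the natural summability.

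Next, build a Jost function $u$ on $\bbD$ out of three ingredients: (a) for each eigenvalue $E_j$, a character-automorphic Blaschke product $B_j$ with zeros at $\mathbf{x}^{-1}(E_j)$, convergent under \eqref{2.1} together with the Widom property; (b) a character-automorphic ``outer'' Szeg\H o function $D$ whose boundary modulus encodes the pulled-back weight $w\circ\mathbf{x}$; (c) the conformal factor $|\mathbf{x}'(e^{i\theta})|$, which transforms the measure $\dist(x,\bbR\setminus\calE)^{-1/2}\,dx$ on $\calE$ into (a constant multiple of) $d\theta/2\pi$. Consequently, the Szeg\H o condition \eqref{2.2} on $\calE$ translates into the classical circle Szeg\H o condition $\int\log|D(e^{i\theta})|^2\,d\theta>-\infty$ on $\partial\bbD$.

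Then derive a step-by-step sum rule of the form
\[
\log\frac{a_1\cdots a_n}{C(\calE)^n} = A_n - \Sigma_n + o(1),
\]
where $A_n$ is a truncated $\log|u|$-integral on $\partial\bbD$ and $\Sigma_n$ collects the eigenvalue contributions from $\{E_j\}$. The normalization $C(\calE)^n$ (rather than $a_1^{(0)}\cdots a_n^{(0)}$ as in DKS) arises from the identity $-\log C(\calE)=\int\log|x-x_0|^{-1}d\omega(x)$ (the Robin constant), which on the isospectral torus equals the $\log$-average of $a_n^{(0)}$. Under \eqref{2.1}, $\Sigma_n$ has a finite limit, so the $\limsup$ in \eqref{2.3} is positive iff $A_n$ stays bounded below, iff the Szeg\H o integral is finite, iff \eqref{2.2} holds; conversely, when \eqref{2.2} and \eqref{2.1} hold, monotonicity / semicontinuity in the sum rule forces the actual $\liminf$ in \eqref{2.3} to be positive (in fact, a limit exists).

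The main obstacle is handling the character automorphy when $\{\omega(\calE_j)\}$ is irrational. In the DKS periodic proof the characters are torsion and one works with honest single-valued functions on a $p$-fold cover; here one must verify that the character-automorphic Blaschke products converge (Widom's theorem on finite gap sets), that the outer function $D$ can be constructed despite its non-single-valued behaviour, and that the character of $u$ varies continuously in the relevant parameters so that the error $o(1)$ in the sum rule is genuinely $o(1)$. Once these ingredients are in place, the equivalence (i)$\Leftrightarrow$(ii) follows by the same bookkeeping as in the periodic case, and the full announced result drops out.
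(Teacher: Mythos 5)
Your outline---universal cover of $(\bbC\setminus\calE)\cup\{\infty\}$, character-automorphic Blaschke products and an outer Szeg\H{o} factor, a step-by-step sum rule whose eigenvalue term is controlled by \eqref{2.1}, and an equilibrium-measure change of variables producing the $\dist(x,\bbR\setminus\calE)^{-1/2}$ weight---is the same general route the paper takes. But there is a genuine gap at the step you wave through as ``the same bookkeeping as in the periodic case.'' To run the sum rule in both directions, and in particular to get the genuinely new implication \eqref{2.3} $+$ \eqref{2.1} $\Rightarrow$ \eqref{2.2} (the paper stresses that the Widom and Peherstorfer--Yuditskii ideas do \emph{not} give this half), you need a canonical factorization of the pulled-back $m$-function $M(z)=-m(x(z))$ on $\bbD$ of the form \eqref{4.3}: Blaschke factors times an outer factor, with $\log\abs{M(e^{i\theta})}\in\bigcap_{p<\infty}L^p$ and, crucially, no singular inner component (a singular part would swallow entropy and kill exactly the converse direction). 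In the classical and periodic settings this is routine because the argument of the relevant Herglotz function is bounded (in DKS one unwinds to single-valued objects on a finite cover); here $\arg M$ is unbounded on $\bbD$: on a sheet $\gamma[\bbF]$ with $\gamma$ a word of length $n$ one only has $\abs{\arg M}\leq\pi(2n+1)$, so the standard bounded-argument route to the factorization fails.

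The missing input is not the Widom/Burnside ($t=1$) condition you invoke, which only gives convergence of the Blaschke products, but Beardon's theorem (Theorem~\ref{T3.2}): $\sum_{\gamma\in\Gamma}\abs{\gamma'(0)}^t<\infty$ for some $t<1$, whose consequence (Corollary~\ref{C3.3}) $\abs{\calR_m}\leq Ce^{-Am}$ shows that the boundary set where $\arg\bigl(M/BB_\infty\bigr)$ exceeds $4\pi(n+1)$ is exponentially small uniformly in the radius; this is what yields $\log(M/BB_\infty)\in\bigcap_{p<\infty}H^p(\bbD)$ and hence the representation Theorem~\ref{T4.1}, the technical heart of the proof. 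Without this (or a substitute), the $o(1)$ in your proposed sum rule, the identification of $A_n$ with a genuine boundary entropy, and the absence of a singular inner part are all unjustified, so the equivalence does not ``drop out.'' Two smaller points: the normalization by $C(\calE)^n$ via the Robin constant is fine but is not where the difficulty lies, and the paper also flags extra care needed for eigenvalues sitting in the gaps, which your sketch does not address.
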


That (i) $+$ \eqref{2.1} $\Rightarrow$ (ii) is not new. When $N=0$ (i.e., no bound states),
(i) $\Rightarrow$ (ii) goes back to Widom \cite{Widom}. Peherstorfer--Yuditskii \cite{PY}
proved (i) $\Rightarrow$ (ii) under a condition on the bound states, which after a query
from Damanik--Killip--Simon, Peherstorfer--Yuditskii improved to \eqref{2.1} and posted
on the arXiv \cite{PYarx}. Thus the new element of Theorem~\ref{T2.1} is the converse
direction (ii) $+$ \eqref{2.1} $\Rightarrow$ (i). It does not seem to us that the ideas
in \cite{Widom,PY} alone will provide that half.

Associated to each such $\calE$ is a natural isospectral torus:
certain almost periodic Jacobi matrices that lie in an
$\ell$-dimensional torus. Although the torus, $\calT_\calE$, has
been studied before (e.g., \cite{Widom} or \cite{SY}), many features
are not explicit in the literature, so we wrote \cite{CSZ1}.

We will need the proper analog of the ``Jost function" for this
situation. It involves the potential theorist's Green's function for
$\calE$, $G_\calE$, the unique function harmonic on $\bbC\setminus
\calE$, with zero boundary values on $\calE$ and with
$G_\calE(z)=\log\abs{z} + O(1)$ near infinity. We let $d\rho_\calE$
be the equilibrium measure for $\calE$ with density $\rho_\calE(x)$
with respect to the Lebesgue measure and define $u(0;J)$ by
\begin{equation} \lb{2.3a}
u(0;J) = \prod_{j=1}^N \exp (-G_\calE(E_j)) \exp\biggl( -\f12
\int_\calE \log \biggl( \f{w(x)}{\rho_\calE(x)}
\biggr)d\rho_\calE(x)\biggr)
\end{equation}
We note that since $\rho_\calE(x)\sim\dist(x,\bbR\setminus
\calE)^{-1/2}$, the Szeg\H{o} condition \eqref{2.2} implies the
convergence of the integral in \eqref{2.3a}, and since on
$\bbR\setminus \calE$, $G_\calE(x)$ vanishes as
$\dist(x,\calE)^{1/2}$ as $x\to \calE$, \eqref{2.1} implies
convergence of the product in \eqref{2.3a}.

The other main result is the following:

\begin{theorem}\lb{T2.2} Suppose $J$ is a Jacobi matrix obeying the conditions
\eqref{2.1a}--\eqref{2.3} in $\calE$. Then there is a point
$J_\infty = \{a_n^{(\infty)}, b_n^{(\infty)}\}_{n=1}^\infty
\in\calT_\calE$ so
\begin{equation} \lb{2.4}
\abs{a_n-a_n^{(\infty)}} + \abs{b_n-b_n^{(\infty)}} \to 0
\end{equation}
as $n\to\infty$. Moreover, $a_1\dots a_n/ C(\calE)^n$ is almost
periodic. Indeed,
\begin{equation} \lb{2.5}
\f{a_1\dots a_n}{a_1^{(\infty)}\dots a_n^{(\infty)}}\to \f{u(0;J_\infty)}{u(0;J)}
\end{equation}
More generally, if $d\mu^{(\infty)}$ is the spectral measure for
$J_\infty$, we have that for $x\in\bbC\setminus \calE$,
\begin{equation} \lb{2.6}
\f{p_n(x,d\mu)}{p_n(x,d\mu^{(\infty)})}
\end{equation}
has a limit.
\end{theorem}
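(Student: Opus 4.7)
The plan is to adapt the Jost-function and step-by-step sum rule strategy (as used by Killip--Simon and Damanik--Killip--Simon, and in the periodic setting by \cite{DKS2007}) to the general finite gap case via uniformization of $\bbC\cup\{\infty\}\setminus\calE$. First I would introduce the universal cover $\bfx\colon\bbD\to\bbC\cup\{\infty\}\setminus\calE$ with its Fuchsian covering group $\Gamma$, and pull back $u(z;J)$ to a character-automorphic analytic function on $\bbD$ whose modulus on $\partial\bbD$ recovers the Szeg\H{o} integrand in \eqref{2.3a} and whose zeros inside $\bbD$ are the preimages of the bound states $\{E_j\}$. The Szeg\H{o} condition \eqref{2.2} plus \eqref{2.1} is exactly what is needed for the corresponding Blaschke product and outer function to converge, so $u(\dott;J)$ is a well-defined character-automorphic function with $u(0;J)$ given by \eqref{2.3a}.

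Next I would prove the coefficient stripping / step-by-step relation. Letting $J^{(n)}$ denote $J$ with its first $n$ rows and columns removed, one verifies, using the $m$-function form of the Jost function and the recursion $m(z;J)=\bigl(b_1-z-a_1^2 m(z;J^{(1)})\bigr)^{-1}$ after a careful analysis on the cover, an identity of the shape
\begin{equation}
\frac{u(0;J^{(1)})}{u(0;J)}=\frac{a_1}{C(\calE)}\,\chi_1,
\end{equation}
where $\chi_1$ is a unimodular factor determined by the character shift under $\Gamma$. Iterating gives
\begin{equation}
\frac{a_1\cdots a_n}{C(\calE)^n}=\frac{u(0;J^{(n)})}{u(0;J)}\,\prod_{k=1}^n\chi_k^{-1},
\end{equation}
and the product of characters is almost periodic in $n$ (its frequencies are the harmonic measures of the $\calE_j$). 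This already gives \eqref{2.5} and the almost periodicity of $a_1\cdots a_n/C(\calE)^n$, \emph{provided} we know that $u(0;J^{(n)})$ converges.

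To get \eqref{2.4} and to identify the limit $J_\infty\in\calT_\calE$, I would combine the sum rule with a right-limits / Denisov--Rakhmanov--Remling argument: the Szeg\H{o} condition is preserved under stripping with a quantitative control, so Remling's theorem forces every right limit of $J$ to lie in the reflectionless class on $\calE$, i.e.\ in $\calT_\calE$. Compactness of $\calT_\calE$ together with the almost-periodic identity above, the fact that $u(0;\dott)$ is continuous on $\calT_\calE$, and the rigidity that the characters $\prod_{k\le n}\chi_k$ determine the point on the torus, forces uniqueness of the right limit; this yields \eqref{2.4}. Finally, for \eqref{2.6}, one writes the same step-by-step identity with $z\in\bbC\setminus\calE$ in place of $0$, obtains
\begin{equation}
\frac{p_n(x;d\mu)}{p_n(x;d\mu^{(\infty)})}=\frac{u(x;J)/u(0;J)}{u(x;J_\infty)/u(0;J_\infty)}+o(1)
\end{equation}
after dividing out the $C(\calE)^n$ and character factors, and takes $n\to\infty$.

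The main obstacle I anticipate is not the algebra of the sum rule, which proceeds as in the periodic case, but the almost-periodic/character bookkeeping together with the uniqueness of the torus limit: when the harmonic measures of the $\calE_j$ are irrational one cannot use periodic orbits, so one must show that the character sequence $\prod_{k\le n}\chi_k$ is truly almost periodic and that convergence of the modulus $a_1\cdots a_n/C(\calE)^n$ plus Remling-style reflectionlessness pin down a single point of $\calT_\calE$. This is where the converse direction (ii)$+$\eqref{2.1}$\Rightarrow$(i) of Theorem~\ref{T2.1}, which controls the integrand in \eqref{2.3a} from the boundedness of the ratio in \eqref{2.3}, plays an essential role, and it is the step where the arguments of \cite{Widom,PY} do not directly apply.
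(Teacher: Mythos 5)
Your proposal does not follow the paper's actual route to Theorem~\ref{T2.2}. The proof given in \cite{CSZ2} and described in Section~\ref{s6} follows Widom \cite{Widom} and the Szeg\H{o} variational approach \cite{Sz20}: one studies $B(z)^n P_n(x(z))$ as the minimizer of an $L^2$-norm among character automorphic functions with the $n$-dependent character $C_0^n$, and the drift of that character over $\Gamma^*$ is what produces the almost periodic behavior in \eqref{2.4}--\eqref{2.6}. What you propose instead---coefficient stripping of the Jost function, the Denisov--Rakhmanov--Remling theorem \cite{Remppt} to force right limits into $\calT_\calE$, and the character rigidity of Theorem~\ref{T6.1} to single out the limit point---is precisely the alternative scheme the authors sketch at the end of Section~\ref{s6} and explicitly describe as ``a proof (whose details need to be checked)''; so you are reconstructing their hoped-for second proof rather than the one they use.

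As written, your sketch has a genuine gap at exactly the decisive step. Remling gives that every right limit of $J$ lies in $\calT_\calE$, and Theorem~\ref{T6.1} gives that a point of $\calT_\calE$ is determined by its character; but to get uniqueness of the right limit (hence \eqref{2.4}) you must prove that the character/Jost data of the stripped matrices passes to the limit, i.e., that if $J^{(n_k)}\to J_r\in\calT_\calE$ in the product topology, then $C_{J^{(n_k)}}=C_J C_0^{-n_k}\to C_{J_r}$ and $u(0;J^{(n_k)})/u(0;J_r)\to 1$. Compactness of $\calT_\calE$ and continuity of $u(0;\dott)$ on the torus do not give this: the spectral measures of $J^{(n)}$ converge only weakly, and neither the Szeg\H{o} integral nor the Blaschke factor over the eigenvalues is continuous under weak convergence (eigenvalues in gaps can migrate to band edges, $\log w$ can lose mass), so uniform step-by-step control of the sum rule for the stripped matrices is required---these are the very details the paper flags as unchecked. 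Two smaller corrections: the unimodular factors $\chi_k$ in your stripping identity at $z=0$ are spurious, since $u(0;\dott)$ defined by \eqref{2.3a} is positive and \eqref{5.4} at $z=0$ gives exactly $u(0;J^{(n+1)})/u(0;J^{(n)})=a_{n+1}/C(\calE)$ (the character only records how $u$ transforms under $\Gamma$, not a phase in its value at $0$); and $u(0;J^{(n)})$ does not converge---the correct statement is $u(0;J^{(n)})/u(0;J_\infty^{(n)})\to 1$, so \eqref{2.5} and the (asymptotic) almost periodicity follow from \eqref{2.4}, not before it. Your closing remark is apt in one respect: the converse direction of Theorem~\ref{T2.1} is indeed what makes the Szeg\H{o} class stable under coefficient stripping (Proposition~\ref{P5.1}), which any stripping-based argument, including the paper's, relies on.
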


\begin{remarks} 1. It is an interesting calculation to check that \eqref{2.5} holds for $\calE=[-2,2]$
based on the formulas in \cite{KS} (see (1.29)--(1.31) of that paper).

\smallskip
2. The limit in \eqref{2.6} can also be described in terms of a suitable ``Jost function" $u$.
\end{remarks}

When there are no bound states (i.e., $N=0$), this is a result of Widom \cite{Widom}.
Peherstorfer--Yuditskii \cite{PY} found a different proof relying on a machinery of
Sodin--Yuditskii \cite{SY} which allowed some bound states, and their note \cite{PYarx}
extended to \eqref{2.1}. So this theorem is not new---what is new is our proof of it
and the compact form of \eqref{2.5} is new.

One application that Killip--Simon \cite{KS} make of
Theorem~\ref{T1.2} is to prove a conjecture of Nevai \cite{Nev92}
that
\begin{equation} \lb{2.7}
\sum_{n=1}^\infty \, \abs{a_n-1} + \abs{b_n} <\infty
\end{equation}
implies \eqref{1.11}. For \eqref{2.7} implies \eqref{1.12} and a result of Hundertmark--Simon
\cite{HunS} says \eqref{2.7} implies \eqref{1.10}. Damanik--Killip--Simon \cite{DKS2007}
used Theorem~\ref{T1.4} and a matrix version of \cite{HunS} to prove an analog of Nevai's
conjecture for perturbations of periodic Jacobi matrices. This leads us to:

\begin{conjecture}\lb{Con2.3} Suppose $\{a_n^{(\infty)}, b_n^{(\infty)}\}_{n=1}^\infty$ lies in
$\calT_\calE$ and $J$ is a Jacobi matrix obeying
\begin{equation} \lb{2.8}
\sum_{n=1}^\infty\, \abs{a_n-a_n^{(\infty)}} + \abs{b_n-b_n^{(\infty)}} <\infty
\end{equation}
Then the Szeg\H{o} condition, \eqref{2.2}, holds.
\end{conjecture}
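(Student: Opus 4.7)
The plan is to verify the two hypotheses of Theorem~\ref{T2.1}, namely the half-power bound-state condition \eqref{2.1} and the $\limsup$ condition \eqref{2.3}, and then invoke that theorem to obtain the Szeg\H{o} condition \eqref{2.2}. This mirrors the strategy by which Killip--Simon deduce the Nevai conjecture from Theorem~\ref{T1.2}, and by which Damanik--Killip--Simon handle the periodic case.

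First I would treat the asymptotics of $a_1\cdots a_n$. Since $J_\infty\in\calT_\calE$, the parameters $a_n^{(\infty)}$ are bounded above and uniformly away from zero, so $|a_n/a_n^{(\infty)}-1|\le C|a_n-a_n^{(\infty)}|$ and the hypothesis \eqref{2.8} forces $\sum_n |\log(a_n/a_n^{(\infty)})|<\infty$. Hence $\prod_n(a_n/a_n^{(\infty)})$ converges to a strictly positive limit. From the structure of the isospectral torus (as developed in \cite{CSZ1}; see also \cite{Widom,SY}), the sequence $a_1^{(\infty)}\cdots a_n^{(\infty)}/C(\calE)^n$ is almost periodic in $n$ and bounded away from $0$. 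Multiplying, $a_1\cdots a_n/C(\calE)^n$ is bounded below by a positive constant along the whole sequence, which is stronger than \eqref{2.3}.

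The main obstacle is the bound-state estimate \eqref{2.1}. I would aim for a Hundertmark--Simon type inequality
\[
\sum_{j=1}^N \dist(E_j,\calE)^{1/2} \le C_\calE \sum_{n=1}^\infty \bigl(|a_n-a_n^{(\infty)}|+|b_n-b_n^{(\infty)}|\bigr),
\]
valid for every $J_\infty\in\calT_\calE$. For $\calE=[-2,2]$ and $J_\infty$ the free matrix this is \cite{HunS}; for periodic $\calE$ and periodic $J_\infty$ it is the matrix Hundertmark--Simon bound of \cite{DKS2007} applied to $J-J_\infty$ via $p\times p$ block decomposition. To extend to arbitrary finite gap $\calE$, my first attempt would be to approximate the almost periodic $J_\infty$ by periodic $J_\infty^{(k)}$ whose essential spectra $\calE^{(k)}$ Hausdorff-converge to $\calE$, apply the \cite{DKS2007} block bound to $J-J_\infty^{(k)}$, and pass to the limit, controlling the change in essential spectrum by continuity of $\dist(\dott,\calE^{(k)})^{1/2}$. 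A cleaner (and, I expect, ultimately preferable) route is to work directly on $\bbC\setminus\calE$: the Jost function $u(z;J)$ sketched via \eqref{2.3a}, read through the character automorphic Hardy space machinery of \cite{SY}, encodes the $E_j$ as its zeros in the gaps with weight $G_\calE(E_j)\asymp\dist(E_j,\calE)^{1/2}$, and an $H^1$ control on $\log u$ coming from trace-class-ness of $J-J_\infty$ yields exactly $\sum G_\calE(E_j)<\infty$.

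Once both \eqref{2.1} and \eqref{2.3} are in hand, Theorem~\ref{T2.1} delivers \eqref{2.2} and the conjecture is proved. The first and third steps are essentially bookkeeping given \cite{CSZ1} and Theorem~\ref{T2.1}; the genuinely hard work is the Lieb--Thirring bound of the second step, which is the natural analog in the finite gap setting of \cite{HunS} and its matrix version in \cite{DKS2007}, and which I would expect to be the central technical contribution of any eventual proof.
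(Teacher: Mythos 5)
The statement you are proving is stated in the paper as a \emph{conjecture}, and the paper is explicit about why: the reduction you perform in your first and third steps (verify \eqref{2.3}, then feed \eqref{2.1} and \eqref{2.3} into Theorem~\ref{T2.1}) is exactly the known part, and ``the issue is whether \eqref{2.8} implies \eqref{2.1}.'' Frank--Simon--Weidl \cite{FSW} give the half-power bound for eigenvalues above and below the convex hull of $\calE$, but for eigenvalues in the gaps it is open; the best known substitute is Hundertmark--Simon \cite{HS2007}, which needs the strictly stronger weighted hypothesis \eqref{2.8x} and yields only Corollary~\ref{T2.4}. So the genuinely hard step you correctly isolate -- the finite gap Lieb--Thirring inequality with exponent $1/2$ under a bare $\ell^1$ condition relative to a point of $\calT_\calE$ -- is not something you prove; it is the conjecture itself in only slightly disguised form.

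Neither of your two sketches for that step survives scrutiny as written. The periodic-approximation route fails at the first move: the quantity you would feed into the matrix Hundertmark--Simon bound of \cite{DKS2007} is $\sum_n(|a_n-a_n^{(\infty,k)}|+|b_n-b_n^{(\infty,k)}|)$ with $J_\infty^{(k)}$ periodic, and this is generically infinite, because an almost periodic $J_\infty\in\calT_\calE$ with irrational harmonic measures is not an $\ell^1$ perturbation of any periodic Jacobi matrix; moreover the periods of approximants must tend to infinity and there is no uniformity in $p$ (or in $\calE^{(k)}$) of the constants in \cite{DKS2007,HunS} that would let you pass to the limit. The ``cleaner'' route through the Jost function is circular: the product over bound states in \eqref{2.3a} (equivalently the Blaschke product in \eqref{5.1}) converges \emph{because} of \eqref{2.1}, so the object whose $H^1$ control is supposed to deliver $\sum_j G_\calE(E_j)<\infty$ is only defined once that sum is already known to be finite. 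Your argument is therefore a correct reduction of the conjecture to its known crux, plus an unproven central lemma, not a proof.
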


The issue is whether \eqref{2.8} implies \eqref{2.1}. That it holds
for the eigenvalues above and below the spectrum is a result of
Frank--Simon--Weidl \cite{FSW}, but it remains unknown for
eigenvalues in the gaps. However, Hundertmark--Simon \cite{HS2007}
showed that if for some $\veps >0$,
\begin{equation} \lb{2.8x}
\sum_{n=1}^\infty\, [\log(n+1)]^{1+\veps} [\abs{a_n - a_n^{(\infty)}} + \abs{b_n - b_n^{(\infty)}}] <\infty
\end{equation}
then \eqref{2.1} holds. Thus, we have a corollary of Theorem~\ref{T2.1}:

\begin{corollary}\lb{T2.4} If \eqref{2.8x} holds for some $\{a_n^{(\infty)}, b_n^{(\infty)}\}\in\calT_\calE$,
then \eqref{2.2} holds.
\end{corollary}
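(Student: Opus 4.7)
The plan is to verify the two hypotheses of Theorem~\ref{T2.1} (namely \eqref{2.1a}--\eqref{2.1} and condition (ii)) and then read off condition (i), which is exactly \eqref{2.2}. First, since \eqref{2.8x} implies $\abs{a_n-a_n^{(\infty)}} + \abs{b_n-b_n^{(\infty)}}\to 0$ and $J_\infty\in\calT_\calE$ has $\sigma_\ess(J_\infty)=\calE$, a standard Weyl-type argument gives $\sigma_\ess(J)=\calE$, so \eqref{2.1a} holds. The Hundertmark--Simon bound \cite{HS2007} cited above the corollary shows that \eqref{2.8x} implies \eqref{2.1}.

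The substantive step is to prove
\[
\limsup_{n\to\infty}\,\f{a_1\cdots a_n}{C(\calE)^n} > 0.
\]
I would split
\[
\f{a_1\cdots a_n}{C(\calE)^n} = \f{a_1\cdots a_n}{a_1^{(\infty)}\cdots a_n^{(\infty)}}\cdot\f{a_1^{(\infty)}\cdots a_n^{(\infty)}}{C(\calE)^n}
\]
and show each factor has a positive lower limit. For the second factor, $J_\infty\in\calT_\calE$ is almost periodic with purely a.c.\ spectrum on $\calE$ whose weight has density behaving like $\dist(x,\bbR\setminus\calE)^{-1/2}$ times something positive and continuous, so $J_\infty$ automatically satisfies \eqref{2.1} (trivially, $N=0$) and the Szeg\H{o} condition \eqref{2.2}. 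Hence Theorem~\ref{T2.2} applies to $J_\infty$ and yields that $a_1^{(\infty)}\cdots a_n^{(\infty)}/C(\calE)^n$ is almost periodic in $n$; being a positive almost periodic sequence, it is bounded above and below by positive constants.

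For the first factor, note that almost periodicity of $\{a_n^{(\infty)}\}$ gives a uniform lower bound $a_n^{(\infty)}\geq c>0$, and \eqref{2.4} then forces the same uniform lower bound for $\{a_n\}$ (for $n$ large, hence for all $n$ after adjusting $c$). Consequently $\abs{\log a_n - \log a_n^{(\infty)}} \leq C\abs{a_n - a_n^{(\infty)}}$ for all large $n$, and since \eqref{2.8x} plainly implies \eqref{2.8}, the series $\sum_n (\log a_n - \log a_n^{(\infty)})$ converges absolutely. Thus $\prod_{k=1}^n a_k/\prod_{k=1}^n a_k^{(\infty)}$ converges to a finite positive limit. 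Combining the two factors yields the desired positive $\limsup$, and Theorem~\ref{T2.1} then delivers \eqref{2.2}.

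The main obstacle is the input about elements of the isospectral torus: that $J_\infty\in\calT_\calE$ has absolutely continuous spectral weight of order $\dist(x,\bbR\setminus\calE)^{-1/2}$ (hence satisfies the Szeg\H{o} condition) and that $a_1^{(\infty)}\cdots a_n^{(\infty)}/C(\calE)^n$ is almost periodic and bounded away from zero. These structural facts about $\calT_\calE$ are classical (going back to Widom \cite{Widom} and Sodin--Yuditskii \cite{SY}) and will be developed systematically in \cite{CSZ1}, so they can be invoked as black boxes. Everything else in the proof is bookkeeping using \eqref{2.8x} to transfer known properties of $J_\infty$ to $J$.
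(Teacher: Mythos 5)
Your proposal is correct and follows essentially the same route as the paper: Weyl's theorem for \eqref{2.1a}, the Hundertmark--Simon bound \cite{HS2007} for \eqref{2.1}, comparison of $a_1\cdots a_n$ with $a_1^{(\infty)}\cdots a_n^{(\infty)}$ (using $\ell^1$ closeness and the structural facts about $\calT_\calE$) to get \eqref{2.3}, and then the new direction (ii) $+$ \eqref{2.1} $\Rightarrow$ (i) of Theorem~\ref{T2.1} to conclude \eqref{2.2}. Two cosmetic remarks: do not invoke \eqref{2.4} (a conclusion of Theorem~\ref{T2.2}, whose hypotheses already include \eqref{2.2}) to get $a_n-a_n^{(\infty)}\to 0$, since that follows directly from \eqref{2.8x}; and ``positive almost periodic hence bounded below'' is not a valid general principle, though all you actually need is $\limsup_n a_1^{(\infty)}\cdots a_n^{(\infty)}/C(\calE)^n>0$, which comes from the Widom direction of Theorem~\ref{T2.1} applied to $J_\infty$ (whose Szeg\H{o} condition is one of the torus facts you rightly take as a black box from \cite{Widom,SY,CSZ1}).
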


The big open question on which we are working is extending the Killip--Simon theorem (Theorem~\ref{T1.3})
to a general finite gap setting.

\section{Covering Maps and Beardon's Theorem} \lb{s3}

To understand the approach to the proofs we will discuss in this
section and the next, we need to explain the machinery behind the
proofs of Theorems~\ref{T1.2}--\ref{T1.5}. It goes back to the
Szeg\H{o} mapping (\cite{Sz22a}; see \cite[Sect.~13.1]{OPUC2}) of
OPRL problems on $[-2,2]$ to OPUC via $x=2\cos\theta=z+z^{-1}$ if
$z=e^{i\theta}$. It was realized by Peherstorfer--Yuditskii
\cite{PYpams} and Killip--Simon \cite{KS} that while $x=2\cos\theta$
will not work on the level of measures if there are mass points
outside $[-2,2]$, the map
\begin{equation} \lb{3.1}
x(z)=z+z^{-1}
\end{equation}
allows one to drag
\begin{equation} \lb{3.2}
m(x)=\int \f{d\mu(t)}{t-x}
\end{equation}
back to $\bbD$ and use function theory on the disk.

Following Sodin--Yuditskii \cite{SY}, we can do something similar
for finite gap situations. $x(z)$ given by \eqref{3.1} is the unique
analytic map of $\bbD$ to $(\bbC\setminus [-2,2])\cup\{\infty\}$
which is a bijection with $x(0)=\infty$, $\lim_{z\to 0} zx(z)>0$. If
$(\bbC\setminus [-2,2])\cup \{\infty\}$ is replaced by
$(\bbC\setminus \calE)\cup\{\infty\}$, there is no map with these
properties because $(\bbC\setminus \calE)\cup\{\infty\}$ is not
simply connected. Rather, its fundamental group, $\pi_1$, is
isomorphic to $F_\ell$, the free non-abelian group on $\ell$
generators. But if we demand that $x$ be onto and only locally
one-one, there is such a map.

For $(\bbC\setminus \calE)\cup\{\infty\}$ has a universal covering
space which is locally homeomorphic to $(\bbC\setminus
\calE)\cup\{\infty\}$ on which $\pi_1$ acts. This local map can be
used to give a unique holomorphic structure, that is, the universal
cover is a Riemann surface and $\pi_1$ acts as a set of
biholomorphic bijections. The theory of uniformization (see
\cite{FarKra}) implies the cover is the unit disk. Thus:

\begin{theorem}\lb{T3.1} There is a unique holomorphic map of $\bbD$ to $(\bbC\setminus \calE)\cup\{\infty\}$
which is onto, locally one-one, with $x(0)=\infty$ and $\lim_{z\to 0 } zx(z) >0$. Moreover, there is
a group $\Gamma$ of M\"obius maps of $\bbD$ onto $\bbD$ so $\Gamma\cong F_\ell$ and
\[
x(z)=x(w) \Leftrightarrow \exists\, \gamma\in\Gamma \text{ so that } \gamma(z)=w
\]
\end{theorem}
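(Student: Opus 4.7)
The plan is to apply the uniformization theorem to the domain $\Omega = (\bbC\setminus\calE)\cup\{\infty\}$, viewed as an open connected subset of the Riemann sphere $\hatt\bbC$. First I would identify the fundamental group: since $\calE$ is a disjoint union of $\ell+1$ compact intervals, each contractible, a direct deformation-retract argument (or the Mayer--Vietoris/van Kampen computation for $\hatt\bbC$ minus finitely many disks) shows $\pi_1(\Omega)\cong F_\ell$, the free non-abelian group on $\ell$ generators. The rank is one less than the number of components because $\hatt\bbC$ is compact, so a loop enclosing all but one component is trivial.

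Next, $\Omega$ is a planar Riemann surface whose complement contains more than two points, so by the uniformization theorem its universal cover is biholomorphic to $\bbD$. Let $\pi\colon\bbD\to\Omega$ be the universal covering map; it is holomorphic, onto, and locally one-to-one. Standard covering-space theory gives the deck group
\[
\Gamma = \{\gamma\in\text{Aut}(\bbD) : \pi\circ\gamma = \pi\} \cong \pi_1(\Omega) \cong F_\ell,
\]
and $\pi(z)=\pi(w)$ if and only if $w=\gamma(z)$ for some $\gamma\in\Gamma$. Since $\text{Aut}(\bbD)$ consists of M\"obius maps preserving $\bbD$, this gives the group $\Gamma$ required by the theorem.

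To normalize, pick any $z_0\in\pi^{-1}(\infty)$ and precompose with a M\"obius automorphism of $\bbD$ sending $0$ to $z_0$; we may assume $\pi(0)=\infty$. Locally near $0$, the map $\pi$ has a simple pole (since it is a local biholomorphism onto a neighborhood of $\infty$), so $z\pi(z)$ extends holomorphically to $0$ with nonzero limit $c\in\bbC^\times$. Precomposing with the rotation $z\mapsto e^{i\arg c}\,z$, which fixes $0$, we arrange $c>0$. Setting $x:=\pi$ gives a map with all the stated properties.

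Uniqueness follows from the lifting property: if $\ti x$ is another such map, it too is a universal cover (being holomorphic, onto, and locally one-one from the simply connected $\bbD$), so $\ti x$ lifts through $x$ to a biholomorphism $\phi\in\text{Aut}(\bbD)$ with $x\circ\phi=\ti x$. The conditions $\ti x(0)=\infty=x(0)$ and $\lim_{z\to 0} z\ti x(z)>0$ together with $\lim_{z\to 0} zx(z)>0$ force $\phi(0)=0$ and $\phi'(0)>0$, hence $\phi=\id$. The main obstacle is really only bookkeeping: the substantive input is uniformization, and the rest is verifying that the three real normalization conditions exactly kill the three-parameter ambiguity in $\text{Aut}(\bbD)$ and correctly identify the fundamental group; nothing in the argument is special to the finite gap setting beyond the counting of components of $\calE$.
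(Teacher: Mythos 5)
Your overall route is the paper's route: the paper's justification for Theorem~3.1 is exactly the paragraph preceding it (pass to the universal cover of $(\bbC\setminus\calE)\cup\{\infty\}$, invoke uniformization to identify the cover with $\bbD$, and take $\Gamma$ to be the deck group, isomorphic to $\pi_1\cong F_\ell$). Your computation of $\pi_1$, the identification of the deck group with M\"obius maps, and the normalization argument (simple pole at $0$, rotation to make $\lim_{z\to 0}zx(z)>0$, three real conditions killing the three-parameter ambiguity) are all correct and match what the paper intends.

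There is, however, a genuine gap in your uniqueness step. You assert that any holomorphic map $\ti x:\bbD\to(\bbC\setminus\calE)\cup\{\infty\}$ which is onto and locally one-one is automatically a universal covering map "being holomorphic, onto, and locally one-one from the simply connected $\bbD$." That implication is false: a surjective local homeomorphism need not be a covering map (no path- or homotopy-lifting is guaranteed), and simple connectivity of the source does not help. In fact, for the literal class of maps you allow one can manufacture competitors: let $x$ be the covering map, let $\sigma$ be an arc running from an interior point of one of the open orthodisks out to $\partial\bbD$, so that $U=\bbD\setminus\sigma$ is simply connected, contains $\ol{\bbF}\cap\bbD$ (hence meets every $\Gamma$-orbit), and contains $0$; if $\phi:\bbD\to U$ is the Riemann map with $\phi(0)=0$, $\phi'(0)>0$, then $x\circ\phi$ is holomorphic, locally one-one, onto, satisfies $x(\phi(0))=\infty$ and $\lim_{z\to 0}z\,x(\phi(z))=c/\phi'(0)>0$, yet $x\circ\phi\neq x$ since $\phi$ is not a deck transformation. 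So uniqueness cannot be derived from "onto $+$ locally one-one" alone; it must be proved (and really should be stated) within the class of covering maps, equivalently among maps also satisfying the fiber condition $x(z)=x(w)\Leftrightarrow w=\gamma(z)$ for some $\gamma\in\Gamma$, which is how the fuller treatments (Sodin--Yuditskii, and the companion paper on the isospectral torus) set it up. Once $\ti x$ is assumed to be a covering, your lifting argument is exactly right: the lift $\phi$ is an automorphism of $\bbD$, and the normalization forces $\phi(0)=0$, $\phi'(0)>0$, hence $\phi=\id$. So the fix is to either restrict the uniqueness claim to covering maps or supply a proof that, for this particular $\Omega$, the hypotheses force the covering property; the latter is not automatic and is the one substantive point your write-up passes over.
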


Thus, $x$ is automorphic for $\gamma$, that is, $x\circ\gamma=x$. If
one looks at $x^{-1}[(\bbC\setminus [\alpha_1,
\beta_{\ell+1}])\cup\{\infty\}]$, there is a unique connected
inverse image containing $0$, call it $\bbF$. This is $\bbD$ with
$\ell$ orthodisks (i.e., disks whose boundary is orthogonal to
$\partial\bbD$) removed from the upper half-disk and their symmetric
partners under complex conjugation (see Figure~1: the shaded area is
the inverse image of the lower half-plane).

\begin{center}
\begin{figure}[h]
\includegraphics[scale=0.4]{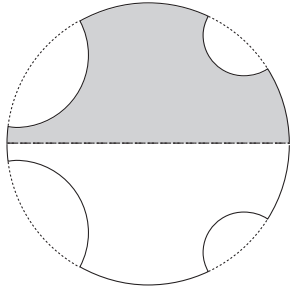}
\caption{The fundamental domain, $\bbF$}
\end{figure}
\end{center}

Label the circles in the upper half-plane $C_1^+, \dots, C_\ell^+$
going clockwise, and $C_1^-, \dots, C_\ell^-$ the conjugate circles.
Let $\gamma_j^\pm$ be the composition of complex conjugation
followed by inversion in $C_j^\pm$, so $\gamma_j^\pm
[\,\ol{\bbF}\,]$ lies inside the disk bounded by $C_j^\pm$. $\Gamma$
consists of words in $\{\gamma_j^\pm\}$, that is, finite products of
these elements with the rule that no $\gamma_j^+$ is next to a
$\gamma_j^-$ (same $j$) for $(\gamma_j^+)^{-1} =\gamma_j^-$. Thus,
$\Gamma =\{\id\}\cup\Gamma^{(1)} \cup \cdots$ where $\Gamma^{(k)}$
has $2\ell(2\ell-1)^{k-1}$ elements, each a word of length $k$.

We define
\begin{equation} \lb{3.3}
\calR_m = \partial\bbD\, \bigg\backslash \,
\bigcup_{\gamma\in\{\id\}\cup\cdots\cup\Gamma^{(m-1)}} \gamma
[\,\ol{\bbF}\,]
\end{equation}
Figure~2 shows three levels of orthocircles. $\calR_3$ is the part of $\partial\bbD$ inside
the 36 small circles.

\begin{center}
\begin{figure}[ht]
\includegraphics[scale=0.35]{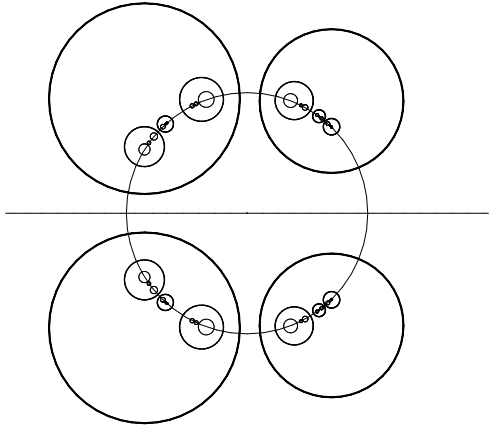}
\caption{Images of $\bbF$ under words of length $\leq 3$}
\end{figure}
\end{center}

In \cite{Bear}, Beardon proved the following theorem:

\begin{theorem}\lb{T3.2} Let $\Gamma$ be a finitely generated Fuchsian group so that the set of
limit points of $\{\gamma(0)\}_{\gamma\in\Gamma}$ is not all of $\partial\bbD$. Then there exists
$t<1$ so that
\begin{equation} \lb{3.4}
\sum_{\gamma\in\Gamma} \, \abs{\gamma'(0)}^t <\infty
\end{equation}
\end{theorem}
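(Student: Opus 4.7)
The plan is to recast $\sum_\gamma \abs{\gamma'(0)}^t$ as a Poincar\'e series for $\Gamma$ and exploit the nested geometry of the orthodisks bounding $\bbF$. Since every $\gamma\in\Gamma$ is a M\"obius self-map of $\bbD$, one has $\abs{\gamma'(0)}=1-\abs{\gamma(0)}^2$, so \eqref{3.4} is equivalent to convergence of
$$\sum_{\gamma\in\Gamma}\bigl(1-\abs{\gamma(0)}^2\bigr)^t$$
for some $t<1$. Write $\delta(\Gamma)$ for the infimum of exponents at which this series converges; the task reduces to showing $\delta(\Gamma)<1$.

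Next I would organize the sum by word length. For a reduced word $\gamma=\gamma_{i_1}^{\epsilon_1}\cdots\gamma_{i_k}^{\epsilon_k}\in\Gamma^{(k)}$, the image $\gamma(\bbF)$ lies in the orthodisk bounded by $C_{i_1}^{\epsilon_1}$, and iteratively in a nested chain of orthodisks indexed by the initial segments of $\gamma$. The hypothesis that the orbit of $0$ does not accumulate on all of $\partial\bbD$ means the limit set $\Lambda$ is a proper, and hence (by $\Gamma$-invariance) nowhere dense, closed subset of $\partial\bbD$. This forces a uniform positive separation between the orthocircles $C_j^\pm$ and between them and the ``free arcs'' of $\partial\bbD\setminus\Lambda$, from which a Schwarz-lemma contraction yields a uniform $\rho<1$ with
$$\diam(\gamma(\bbF))\leq C\rho^k \quad \text{for all } \gamma\in\Gamma^{(k)}.$$
Combining with $\abs{\gamma'(0)}\leq C'\diam(\gamma(\bbF))$ (a direct Schwarz--Pick estimate: since $0\in\bbF$ sits at fixed hyperbolic distance from $\partial\bbF$, a hyperbolic ball of fixed radius around $0$ lies in $\bbF$ and maps into $\gamma(\bbF)$), together with the word count $\abs{\Gamma^{(k)}}=2\ell(2\ell-1)^{k-1}$, one obtains
$$\sum_{\gamma\in\Gamma^{(k)}}\abs{\gamma'(0)}^t \leq C''(2\ell-1)^{k-1}\rho^{tk},$$
summable in $k$ provided $(2\ell-1)\rho^t<1$.

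The main obstacle, and what I expect to be the delicate step, is to certify that some $t<1$ can satisfy $(2\ell-1)\rho^t<1$. The naive threshold $t=\log(2\ell-1)/\log(1/\rho)$ may exceed $1$ when the branching of the free group outruns the generator-wise contraction. To close this gap I would follow Beardon by replacing the per-letter Schwarz bound with a \emph{packet} estimate: group words of length $k$ by their terminal letter and integrate the contraction against the harmonic measure of $\partial\bbD\setminus\Lambda$, which by hypothesis has positive mass. At each step a definite fraction of boundary weight escapes onto the free arcs, promoting $\rho$ to an effective rate $\rho'<\rho$ sharp enough to force a usable $t<1$. Equivalently, one can identify $\delta(\Gamma)$ with the Hausdorff dimension of $\Lambda$, a $\Gamma$-invariant Cantor subset of $\partial\bbD$ realized as the attractor of the conformal iterated function system $\{\gamma_j^\pm\}$, and invoke the standard fact that nowhere dense attractors of such systems on a circle have Hausdorff dimension strictly less than $1$. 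Either route yields a $t\in(\delta(\Gamma),1)$ as required.
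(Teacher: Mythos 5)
The paper itself does not prove Theorem~\ref{T3.2}: it quotes Beardon \cite{Bear} and remarks that \cite{Rice} has a simple proof in the special Schottky-type situation at hand, so your argument has to stand on its own. Its skeleton is the standard one (sum over word length, nested orthodisks, $\abs{\gamma'(0)}=1-\abs{\gamma(0)}^2$ controlled by the size of $\gamma(\bbF)$), and you correctly isolate the crux: the worst-case bound $\diam(\gamma(\bbF))\leq C\rho^k$ against the count $2\ell(2\ell-1)^{k-1}$ only helps if $(2\ell-1)\rho^t<1$, which may fail for every $t\leq 1$. But your proposed repair does not close this gap. No improvement of the \emph{uniform per-word} rate from $\rho$ to some $\rho'<\rho$ can work: a bound $\abs{\gamma'(0)}\leq C(\rho')^k$ valid for all $\gamma\in\Gamma^{(k)}$ yields a $t<1$ only when $(2\ell-1)\rho'<1$, i.e., only when worst-case contraction beats the entropy of the free group, and that is precisely what cannot be guaranteed (long words built from a nearly neutral letter decay slowly, while the branching is always $2\ell-1$). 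What your ``a definite fraction of boundary weight escapes onto the free arcs'' idea actually gives---once made precise via a bounded-distortion estimate for M\"obius maps away from the limit set, using that the arcs $\gamma(\ol{\bbF})\cap\partial\bbD$ are pairwise disjoint and that $\abs{\gamma'(0)}$ is comparable to the length of such an arc---is geometric decay of the \emph{level sums}, $\sum_{\gamma\in\Gamma^{(k)}}\abs{\gamma'(0)}\leq C\lambda^k$ with $\lambda<1$ (in substance this is Corollary~\ref{C3.3}). The step your write-up never takes is converting this $t=1$ information into convergence at some $t<1$, e.g.\ by H\"older: $\sum_{\gamma\in\Gamma^{(k)}}\abs{\gamma'(0)}^t\leq \abs{\Gamma^{(k)}}^{1-t}\bigl(\sum_{\gamma\in\Gamma^{(k)}}\abs{\gamma'(0)}\bigr)^t\leq C^t\,[2\ell(2\ell-1)^{k-1}]^{1-t}\lambda^{tk}$, which is summable in $k$ once $(2\ell-1)^{1-t}\lambda^t<1$, i.e., for all $t<1$ close enough to $1$. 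Without this interpolation (or an equivalent pressure/submultiplicativity argument) your packet estimate proves only the Burnside-type $t=1$ statement, not the theorem.

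Your fallback route is also gapped. Identifying the critical exponent of the Poincar\'e series with the Hausdorff dimension of the limit set is Patterson--Sullivan theory, far heavier than the result being proved, and the ``standard fact'' you invoke is false as stated: a compact nowhere dense subset of $\partial\bbD$ can have Hausdorff dimension $1$ and even positive Lebesgue measure (fat Cantor sets), so nowhere density of $\Lambda$ alone gives nothing. What is true, and would itself require proof, is that strong separation of the orthodisks plus bounded distortion makes $\Lambda$ uniformly porous, hence of upper box dimension strictly less than $1$---but that argument is essentially the level-sum estimate above in disguise, so it cannot be used to sidestep it. Finally, note that your whole scheme leans on the free Schottky structure ($\Gamma\cong F_\ell$, pairwise disjoint orthodisks, no parabolic or elliptic elements); that covers the special case this paper needs, but not the general finitely generated Fuchsian group of the second kind in the statement, where fundamental-domain arcs can accumulate at parabolic fixed points and there is no uniform per-letter contraction, which is exactly the extra work Beardon's proof handles.
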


The $\Gamma$ associated to $x$ is clearly finitely generated and points in $\ol\bbF\cap\partial\bbD$
are not limit points, so Beardon's theorem applies. (\cite{Rice} has a simple proof of Beardon's
theorem for this special case of interest here.) In \cite{CSZ2}, we show, using some simple
hyperbolic geometry, that \eqref{3.4} implies

\begin{corollary}\lb{C3.3} Let $\abs{\dott}$ be the Lebesgue measure on $\partial\bbD$. Then there exists
$A>0$ and $C$ so that
\begin{equation} \lb{3.5}
\abs{\calR_m} \leq C e^{-Am}
\end{equation}
\end{corollary}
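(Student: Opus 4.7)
The plan is to reinterpret $|\calR_m|$ as the tail of a Poincar\'{e}-type sum and then combine the convergence in Theorem~\ref{T3.2} with an exponential decay of individual derivatives $|\gamma'(0)|$ in the word length $|\gamma|$.

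\emph{Step 1 (tiling identity).} Since $\Gamma$ acts on $\partial\bbD$ and the translates $\{\gamma(\overline\bbF\cap\partial\bbD)\}_{\gamma\in\Gamma}$ cover $\partial\bbD$ up to the limit set $L(\Gamma)$, and since Beardon's bound with $t<1$ forces the Hausdorff dimension of $L(\Gamma)$ to be at most $t<1$ (so $|L(\Gamma)|=0$), one obtains
\[
|\calR_m|=\sum_{\gamma\in\Gamma,\,|\gamma|\geq m}L_\gamma,\qquad L_\gamma:=\bigl|\gamma(\overline\bbF\cap\partial\bbD)\bigr|.
\]

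\emph{Step 2 (bounding $L_\gamma$ by $|\gamma'(0)|$).} By conformal invariance of harmonic measure with base point $0$,
\[
L_\gamma=2\pi\,\omega\bigl(a,\,\overline\bbF\cap\partial\bbD;\bbD\bigr),\qquad a:=\gamma^{-1}(0),
\]
and a direct Schwarz--Pick computation gives $1-|a|^2=|\gamma'(0)|$. For nontrivial $\gamma$, the point $a$ lies strictly inside the orthodisk bounded by the $C_j^\pm$ determined by the first letter of $\gamma^{-1}$, while $\overline\bbF\cap\partial\bbD$ is disjoint from the corresponding boundary arc. A Poisson-kernel estimate exploiting the strict disjointness of the orthocircles, together with the fact that the limit set of $\Gamma$ is bounded away from the endpoints $C_j^\pm\cap\partial\bbD$, yields a uniform constant $C_0>0$ with
\[
L_\gamma\leq C_0\,|\gamma'(0)|.
\]

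\emph{Step 3 (exponential decay and conclusion).} The Schottky structure of $\Gamma$ (free group on $2\ell$ generators arising from pairwise disjoint orthodisks) yields constants $C_1>0$ and $\rho\in(0,1)$ with $|\gamma'(0)|\leq C_1\rho^{|\gamma|}$, essentially because each generator moves $0$ across a definite positive hyperbolic distance and the reduced-word rule prevents cancellation, giving $d_{\rm hyp}(0,\gamma^{-1}(0))\geq c_1|\gamma|$. Writing $|\gamma'(0)|=|\gamma'(0)|^{1-t}|\gamma'(0)|^t$ and applying Beardon's theorem,
\[
\sum_{|\gamma|=k}|\gamma'(0)|\leq(C_1\rho^k)^{1-t}\sum_{\gamma\in\Gamma}|\gamma'(0)|^t\leq C_2\,e^{-Ak},\qquad A:=(1-t)|\log\rho|>0.
\]
Summing over $k\geq m$ and combining with Steps~1--2 gives $|\calR_m|\leq C\,e^{-Am}$, as desired.

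\emph{The main obstacle} is Step~2. The naive Poisson estimate $\omega(a,\partial\bbD\setminus A_k^\pm)\lesssim 1-|a|$ breaks down if $a$ is allowed to approach an endpoint of the orthodisk arc, because there the harmonic mass splits between the two sides. The remedy is to exploit that the orthodisks have pairwise disjoint closures (inherent in the finite-gap setup) and that, as a consequence, the limit set, and hence every tile $\gamma^{-1}(\overline\bbF)$ realized by a reduced word, stays in a uniform hyperbolic neighborhood of the ``safe'' interior of each arc. Verifying this uniform separation is the ``simple hyperbolic geometry'' indicated in the paragraph preceding the corollary.
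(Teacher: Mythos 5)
Your overall architecture is the one the paper intends: the announcement defers the proof to \cite{CSZ2} and describes it as deducing \eqref{3.5} from Beardon's bound \eqref{3.4} by ``simple hyperbolic geometry,'' and your Steps 1--3 (covering $\calR_m$ up to the limit set by the arcs $\gamma[\,\ol\bbF\cap\partial\bbD\,]$ with $\abs{\gamma}\geq m$, the distortion bound $L_\gamma\leq C_0\abs{\gamma'(0)}$, and interpolating the exponent-$1$ sum between a pointwise exponential bound and the convergent exponent-$t$ Poincar\'e series) are exactly such an argument. Steps 1 and 3 are sound modulo standard facts: the limit set has zero Lebesgue measure, and the pointwise bound $\abs{\gamma'(0)}\leq C_1\rho^{\abs{\gamma}}$ does hold --- though ``each generator moves $0$ a definite distance and there is no cancellation'' is not by itself a proof (compositions of isometries each with large displacement can have small displacement); the correct justification is the nested-disk (ping-pong) argument: since the $2\ell$ closed orthodisks are pairwise disjoint, the geodesic from $0$ to $\gamma(0)$ must cross the $\abs{\gamma}$ successively nested images of the orthocircles, consecutive ones being a uniformly positive hyperbolic distance apart, whence $d_{\mathrm{hyp}}(0,\gamma(0))\geq c\abs{\gamma}-c'$.

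The genuine flaw is in your remedy for the obstacle you correctly identify in Step 2. It is false that ``every tile $\gamma^{-1}(\ol\bbF)$ realized by a reduced word stays in a uniform hyperbolic neighborhood of the safe interior of each arc'': the tile adjacent to $\ol\bbF$ across $C_j^\pm$ meets $\partial\bbD$ in arcs sharing an endpoint with the free arcs of $\ol\bbF$ (each point of $C_j^\pm\cap\partial\bbD$ is a common endpoint of a free arc of $\ol\bbF$ and of a free arc of the neighboring tile), so tiles do come arbitrarily close to $\ol\bbF\cap\partial\bbD$. What you actually need --- and what is true --- concerns the orbit points, not the tiles: there is $\delta>0$ with $\abs{e^{i\theta}-\gamma^{-1}(0)}\geq\delta$ for every $\gamma\neq\id$ and every $e^{i\theta}\in\ol\bbF\cap\partial\bbD$. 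This holds because the orbit of $0$ accumulates only on the limit set, and the limit set is disjoint from the compact set $\ol\bbF\cap\partial\bbD$: every endpoint of a free arc has a neighborhood in $\ol\bbD$ covered by just the two adjacent tiles, hence lies in the ordinary set, and the finitely many orbit points outside a small neighborhood of the limit set are each at positive distance from $\partial\bbD$. With this substitution the Poisson-kernel estimate $L_\gamma\leq C_0\abs{\gamma'(0)}$ is justified (and note that only the inequality $\abs{\calR_m}\leq\sum_{\abs{\gamma}\geq m}L_\gamma$ is needed in Step 1), after which your Step 3 yields \eqref{3.5}.
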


\eqref{3.4} is known to be equivalent to
\begin{equation} \lb{3.6}
\sum_{\gamma\in\Gamma} (1-\abs{\gamma(z)})^t <\infty
\end{equation}
for all $z\in\bbD$. This result for $t=1$ (which goes back to Burnside \cite{Burn}) implies the
existence of the Blaschke product
\begin{equation} \lb{3.7}
B(z,z_0)=\prod_{\gamma\in\Gamma} b(z,\gamma(z_0))
\end{equation}
where
\begin{equation} \lb{3.8}
b(z,w) = -\f{\bar w}{\abs{w}}\, \f{z-w}{1-\bar w z}
\end{equation}
if $w\neq 0$ and $b(z,0)=z$. In particular, we set
\[
B(z)\equiv B(z,z_0=0)
\]

$B$ is related to the Green's function $G_\calE$: we have
\begin{equation} \lb{3.9}
\abs{B(z)} =\exp (-G_\calE(x(z)))
\end{equation}
as can be seen by noting the right side behaves like $C\abs{z}$ near
$z=0$ and \eqref{3.9} holds for $z\in\partial\bbD$.

\section{MH Representation and Szeg\H{o}'s Theorem} \lb{s4}

Simon--Zlato\v s \cite{SZ} and Simon \cite{S288} provided some simplifications of Killip--Simon
\cite{KS} and, in particular, \cite{S288} stated a representation theorem for meromorphic
Herglotz functions. Variants of this representation theorem  are behind parts of \cite{DKS2007}
and other applications of sum rules (e.g., Denisov \cite{Den}).

Our work also depends on such a representation theorem for automorphic meromorphic functions which
obey $\Ima f>0$ on $\bbF\cap\bbC^+$. We prove the following:

\begin{theorem}\lb{T4.1} Let $M(z)=-m(x(z))$, where $m$ is the $m$-function \eqref{3.2} for some $J$,
with $\sigma_\ess(J)=\calE$. For $R<1$, let $B_R(z)$ be the product
$B(z,z_j)$ divided by $B(z,p_j)$ for zeros and poles of $M$ in
$\ol{\bbF}$ with $\Ima z_j\geq 0$, $\Ima p_j \geq 0$ and $\abs{z_j}
<R$, $\abs{p_j}< R$. Then, for $z\in\bbD$,
\begin{equation} \lb{4.1}
B_\infty (z) =\lim_{R\uparrow 1} B_R(z)
\end{equation}
exists for $z$ not a pole of $M$. Moreover, for a.e.\ $\theta\in [0,2\pi)$, $M(e^{i\theta}) =
\lim_{r\uparrow 1} M(re^{i\theta})$ exists,
\begin{equation} \lb{4.2}
\log\abs{M(re^{i\theta})} \in \bigcap_{p<\infty}\, L^p \biggl( \partial\bbD, \, \f{d\theta}{2\pi}\biggr)
\end{equation}
and for $z\in\bbD$,
\begin{equation} \lb{4.3}
a_1 M(z) = B(z) B_\infty(z) \exp\biggl( \f{1}{2\pi} \int \f{e^{i\theta}+z}{e^{i\theta}-z}\,
\log\abs{a_1 M(e^{i\theta})}\, d\theta\biggr)
\end{equation}
\end{theorem}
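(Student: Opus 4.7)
The plan is to adapt the classical Riesz--Smirnov factorization to the covering-space setting, using Beardon's theorem \eqref{3.6} for convergence of Blaschke products and the Herglotz structure of $m$ for boundary behavior and $L^p$-integrability of $\log|M|$. Since $m$ is Herglotz on $(\bbC\setminus\calE)\cup\{\infty\}$ with $m(x)\sim -1/x$ at infinity, the lift $M=-m\circ x$ has a simple zero at $z=0$ with $M(z)/z\to 1/C(\calE)$, and its remaining zeros and poles in $\bbD$ are precisely the $\Gamma$-orbits of representatives $z_j,p_j\in\ol{\bbF}$ (the $p_j$ corresponding to eigenvalues $E_j$ of $J$, the $z_j$ to zeros of $m$ in the gaps and in the unbounded components of $\bbR\setminus\calE$). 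For each $R<1$ only finitely many $z_j,p_j$ lie in $\{|z|<R\}$, so $B_R$ is a genuine finite product. Beardon's theorem, combined with \eqref{3.6} at $t=1$, gives $\sum_{\gamma}(1-|\gamma(w)|)<\infty$ for every $w\in\bbD$, and hence each individual Blaschke product $B(z,z_j)=\prod_{\gamma}b(z,\gamma(z_j))$ converges uniformly on compacta avoiding its zeros.

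Next I would establish the a.e.\ existence of boundary values $M(e^{i\theta})$ together with the $L^p$-bound for $\log|M|$. Standard Herglotz theory gives $\Ima m(x+i0)=\pi w(x)$ a.e.\ on $\calE$ plus ordinary boundary limits a.e.\ on $\bbR\setminus\calE$; composing with $x$, which extends analytically across $\partial\bbD$ away from the $2(\ell+1)$ ramification points in $x^{-1}(\{\alpha_j,\beta_j\}_{j=1}^{\ell+1})$, yields the a.e.\ existence of $M(e^{i\theta})$. For the $L^p$-claim the key is the local square-root behavior of the uniformizer $x$ at ramification points: the inverse-square-root edge singularities typical of Herglotz boundary densities pull back to at worst logarithmic singularities of $\log|M|$, which belong to $L^p$ for every $p<\infty$; away from the finitely many ramification points one bounds $\log_{+}|M|$ by the Herglotz representation of $m$ and controls $\log_{-}|M|$ similarly. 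With $\log|M|\in\bigcap_{p}L^p$ in hand, the remaining $\ol{\bbF}$-level Blaschke summability $\sum_j[(1-|z_j|)+(1-|p_j|)]<\infty$ follows from a Jensen identity on discs $\{|z|<R\}$: grouping zeros and poles by $\Gamma$-orbit and bounding each orbit's contribution by Beardon, the identity reduces to the $L^1$-bound for $\log|M|$ on $\partial\bbD$ just obtained, and hence $B_R\to B_\infty$ as claimed.

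Finally, the quotient $F(z)=a_1 M(z)/[B(z)B_\infty(z)]$ is holomorphic and non-vanishing on $\bbD$, of bounded characteristic, with $|F(e^{i\theta})|=|a_1 M(e^{i\theta})|$ a.e.\ on $\partial\bbD$; applying the Riesz--Smirnov outer-function theorem to $\log F$ yields \eqref{4.3} up to a unimodular constant, which is fixed to $1$ by matching the $z\to 0$ expansions (using $M(z)\sim z/C(\calE)$ and the known leading behavior of $B$ and $B_\infty$ at the origin). I expect the main obstacle to be the $\ol{\bbF}$-level Blaschke condition: standard Nevanlinna theory delivers summability for \emph{all} zeros and poles of a bounded-characteristic function on $\bbD$, but here one needs summability for \emph{one representative per $\Gamma$-orbit}, which requires a Jensen-type accounting adapted to the fundamental domain and tightly coupled with the $L^p$-integrability of $\log|M|$ supplied by the covering map's square-root branching. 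Establishing that $L^p$-bound for all $p<\infty$, which is strictly stronger than the usual Szeg\H{o}-type $L^1$-integrability, is itself the other delicate input.
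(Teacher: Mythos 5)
There is a genuine gap, in fact two. First, Theorem~\ref{T4.1} assumes nothing like \eqref{2.1}: the eigenvalues of $J$ may accumulate at the band edges arbitrarily fast, so your claimed intermediate result $\sum_j[(1-|z_j|)+(1-|p_j|)]<\infty$ is simply false in general, and with it the assertion that $M$ (or $F=a_1M/(BB_\infty)$) is of bounded characteristic: the full pole set of $M$ is the union of the $\Gamma$-orbits of the $p_j$, and once $\sum_j(1-|p_j|)=\infty$ the global Blaschke condition fails, so $M$ is not in the Nevanlinna class and no Jensen/Nevanlinna accounting can produce absolute summability from an $L^1$ bound on $\log|M|$. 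This is exactly why $B_R$ is defined as a \emph{ratio} with a common cutoff $|z_j|,|p_j|<R$ and why \eqref{4.1} is a statement about conditional convergence: the zeros and poles of the Herglotz function $m$ interlace in each gap and outside $[\alpha_1,\beta_{\ell+1}]$, and it is this cancellation, as in \cite{S288}, that makes the alternating quotient converge. Your route to $B_\infty$ would already fail for $\calE=[-2,2]$ with eigenvalues violating \eqref{1.10}.

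Second, even granting $B_\infty$, writing $F=a_1M/(BB_\infty)$ and invoking a Riesz--Smirnov outer representation proves \eqref{4.3} only after one excludes a singular inner factor (and justifies membership in a class where the factorization applies); nonvanishing plus a.e.\ identification of boundary moduli is not enough, and this exclusion is the actual content of the theorem. The paper's mechanism is phase control: $\arg M\in(0,\pi)$ only on $\bbF\cap\bbC_+$, on $\gamma[\bbF]$ with $\gamma$ a word of length $n$ one has $|\arg M|\le\pi(2n+1)$, and Corollary~\ref{C3.3} (a consequence of Beardon's theorem, \eqref{3.5}) shows the boundary region of word length $\geq n$ has measure $\le Ce^{-An}$ uniformly in $r$; together these give $\log\bigl(M/(BB_\infty)\bigr)\in\bigcap_{p<\infty}H^p(\bbD)$, which simultaneously yields the a.e.\ boundary values, \eqref{4.2}, and the absence of a singular part, i.e., \eqref{4.3}. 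Nothing in your outline substitutes for this step. Your argument for \eqref{4.2} also misreads the geometry: the covering map is unramified, the preimages of the band edges form infinite $\Gamma$-orbits on $\partial\bbD$ rather than $2(\ell+1)$ points, a general weight $w$ has no square-root edge behavior, and in any case local integrability near finitely many points does not address the $L^p$ bound or the singular-inner-factor issue.
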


In proving this, the big difference from the case considered in
\cite{S288} is that there, $\arg M(z)\in (0,\pi)$ in the upper
half-disk. This and a similar estimate for $B_\infty(z)$ prove that
$\arg (M(z)/B(z) B_\infty (z))$ is bounded. Here $\arg M(z)$
is in $(0,\pi)$ only on $\bbF\cap\bbC_+$. In general, if $z\in\gamma
[\bbF]$ where $\gamma$ is a word of length $n$ in $\Gamma$ (written
as a product of generators), then $\abs{\arg M(z)}\leq \pi(2n+1)$.
$\arg(M(z)/B_\infty(z) B(z))$ is {\it not\/} bounded. But by
\eqref{3.5}, the set where $\arg(M(re^{i\theta})/B_\infty
(re^{i\theta}) B(re^{i\theta}))\geq 4\pi (n+1)$ has size (in
$\theta$) bounded by $Ce^{-An}$ uniformly in $r$. This still allows
one to see $\log (M(z)/B(z) B_\infty(z))\in\cap_{p<\infty}
H^p(\bbD)$ and yields \eqref{4.3}.

While there are some tricky points with eigenvalues in gaps, once
one has Theorem~\ref{T4.1}, the proof of Theorem~\ref{T2.1} follows
the strategy used in \cite{Rice} to prove the Szeg\H{o} theorem for
$[-2,2]$. The potential theoretic equilibrium measures enter because
one has:

\begin{proposition}\lb{P4.2} If $f$ is a nice function on $\calE$, then
\begin{equation} \lb{4.4}
\int_{\partial\bbD} f(x(e^{i\theta}))\, \f{d\theta}{2\pi} =
\int_\calE f(x)\, d\rho_\calE(x)
\end{equation}
\end{proposition}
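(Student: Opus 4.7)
The plan is to identify both sides of \eqref{4.4} as the value at a distinguished interior point of the solution of a Dirichlet problem, exploiting that $x:\bbD\to(\bbC\setminus\calE)\cup\{\infty\}$ is a holomorphic covering with $x(0)=\infty$. The key ingredient from potential theory is that $d\rho_\calE$ coincides with the harmonic measure of the domain $(\bbC\setminus\calE)\cup\{\infty\}$ from $\infty$ (see, e.g., \cite{Ran}): if $h$ is the bounded harmonic extension to $(\bbC\setminus\calE)\cup\{\infty\}$ of a continuous function $f$ on $\calE$, then $h(\infty)=\int_\calE f\,d\rho_\calE$.

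Since $h$ is bounded, single-valued, and harmonic on its domain, the pullback $H(z):=h(x(z))$ is a bounded harmonic function on $\bbD$ satisfying $H(0)=h(\infty)$. By Fatou's theorem, $H$ has radial boundary values $H^*(e^{i\theta})$ for a.e.\ $\theta$, and the Poisson representation yields
$$
h(\infty)=H(0)=\int_0^{2\pi}H^*(e^{i\theta})\,\f{d\theta}{2\pi}.
$$
It therefore suffices to verify that $H^*(e^{i\theta})=f(x(e^{i\theta}))$ for a.e.\ $\theta$.

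To identify the radial limits, fix $e^{i\theta}$ in the complement of the limit set of $\Gamma$ on $\partial\bbD$. Such an $e^{i\theta}$ lies in the closure of a single translate $\gamma[\bbF]$, on which $x$ is one-to-one onto $(\bbC\setminus[\alpha_1,\beta_{\ell+1}])\cup\{\infty\}$ and extends continuously across the arcs of $\partial\bbD$ in $\partial(\gamma[\bbF])$ into $\calE$. Thus for $r$ sufficiently close to $1$, $re^{i\theta}\in\gamma[\bbF]$ and $x(re^{i\theta})\to x^*(e^{i\theta})\in\calE$. Since every point of $\calE$ is a regular boundary point of $(\bbC\setminus\calE)\cup\{\infty\}$, $h$ extends continuously to $\calE$ with boundary values $f$; consequently $H(re^{i\theta})=h(x(re^{i\theta}))\to f(x^*(e^{i\theta}))$ as required.

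The final input is that the limit set of $\Gamma$ has Lebesgue measure zero on $\partial\bbD$: it is contained in $\calR_m$ for every $m$, and Corollary \ref{C3.3} gives $|\calR_m|\leq Ce^{-Am}\to 0$. The main obstacle in the whole argument is the boundary analysis of the covering map $x$ carried out above; its resolution is precisely why Beardon's Theorem \ref{T3.2} and its Corollary \ref{C3.3} are invoked at this stage of the paper.
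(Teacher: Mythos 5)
Your argument is correct, but it follows a genuinely different route from the paper's. You identify both sides of \eqref{4.4} as the value at a distinguished point of a solution of a Dirichlet problem: you use the classical fact that $d\rho_\calE$ is the harmonic measure of $(\bbC\setminus\calE)\cup\{\infty\}$ at $\infty$ \cite{Ran}, pull the bounded harmonic extension $h$ of $f$ back through the covering map, and then apply Fatou plus the mean-value/Poisson representation at $0$, identifying the radial boundary values of $h\circ x$ off the limit set of $\Gamma$ (which is Lebesgue-null, all points of $\calE$ being regular since each lies in a nondegenerate interval). The paper instead proceeds by unfolding the integral over $\partial\bbD$ into a sum over $\Gamma$-translates of $\ol\bbF\cap\partial\bbD$, using $\abs{B(z)}=\prod_{\gamma\in\Gamma}\abs{\gamma(z)}$ and the resulting identity $\sum_\gamma\abs{\gamma'(e^{i\theta})}=\f{d}{d\theta}\arg B(e^{i\theta})$, and then converting $\f{d\arg B}{du}$ into the normal derivative of $G_\calE$ via \eqref{3.9} and the formula $\f{1}{\pi}\f{\partial G_\calE}{\partial n}\,dx=d\rho_\calE$. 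What the paper's computation buys is the explicit link between \eqref{4.4}, the Blaschke product, and the Green's function (the Remark notes \eqref{3.9} and \eqref{4.4} are equivalent), together with identities reused elsewhere; what your argument buys is brevity and conceptual clarity, since it needs no Blaschke product at all and only requires that the limit set have measure zero. Two small caveats: your final sentence misstates the paper's architecture — Beardon's Theorem \ref{T3.2} and Corollary \ref{C3.3} are invoked for the argument bounds in the proof of Theorem \ref{T4.1}, while the paper's proof of this proposition rests on the $t=1$ (Burnside \cite{Burn}) convergence behind $B$; and the exponential bound of Corollary \ref{C3.3} is far more than you need, since Lebesgue-nullity of the limit set of a finitely generated Fuchsian group of the second kind is classical. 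Also, your continuity claim for $x$ across the free arcs of $\gamma[\ol{\bbF}]$ deserves a word of justification (e.g., $x$ maps each component of $x^{-1}(\bbC_\pm)$ conformally onto a half-plane, whence continuous extension to the adjacent arcs), though this is at the same level of detail the paper itself leaves implicit in its step 4.
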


\begin{remark} 1. Since
$\rho_\calE(x)\sim \dist(x,\bbR\setminus \calE)^{-1/2}$, this leads
to Szeg\H{o} conditions like \eqref{2.2}.

\smallskip
2. It is well known how the equilibrium measure is transformed under
conformal mappings (see, e.g., \cite[Prop.~1.6.2]{Fi83}).
\eqref{4.4} is a multi-valued variant of this result.

\smallskip
3. As will be discussed in \cite{CSZ1}, \eqref{3.9} is a special
case of \eqref{4.4}. In fact, one can show that they are actually
equivalent.
\end{remark}

\begin{proof}[Sketch] 1. One proves that
\begin{equation} \lb{4.5}
\abs{B(z)} = \prod_{\gamma\in\Gamma}\, \abs{\gamma(z)}
\end{equation}

\smallskip
2. On $\partial\bbD$, $(\partial\arg\gamma(e^{i\theta})/\partial\theta)>0$, so \eqref{4.5} implies
\begin{equation} \lb{4.6}
\sum_\gamma \, \abs{\gamma'(e^{i\theta})} = \f{d}{d\theta}\, \arg B(e^{i\theta})
\end{equation}

\smallskip
3. This implies
\begin{equation} \lb{4.7}
\int_{\partial\bbD} f(x(e^{i\theta}))\, \f{d\theta}{2\pi} = \int_{\ol{\bbF}\cap\partial\bbD}
f(x(e^{i\theta}))\, \f{d\arg B}{d\theta}\, \f{d\theta}{2\pi}
\end{equation}

\smallskip
4. Since $x$ is two-one from $\ol{\bbF}\cap\partial\bbD$ to $\calE$, this leads to
\begin{equation} \lb{4.8}
\text{LHS of \eqref{4.7}} = \int_\calE f(u)\, \f{d\arg
B(x^{-1}(u))}{du}\, \f{du}{\pi}
\end{equation}

\smallskip
5. By a Cauchy--Riemann equation,
\[
\f{d\arg B(x^{-1}(u))}{du} = \f{\partial\log \abs{B(x^{-1}(u))}}{\partial n}
\]
a normal derivative which is the normal derivative of the Green's function by \eqref{3.9}.

\smallskip
6.
\[
\f{1}{\pi}\, \f{\partial G_\calE}{\partial n}\, (x)\, dx =
d\rho_\calE(x)
\]
completing the proof.
\end{proof}

\section{The Jost Function and Jost Solutions} \lb{s5}

Let $J$ be a Jacobi matrix that obeys the hypotheses of
Theorem~\ref{T2.1}, that is, \eqref{2.1a}, \eqref{2.1}, \eqref{2.2},
and \eqref{2.3} all hold. In that case, we say $J$ is Szeg\H{o} for
$\calE$. For reasons that will become clear shortly, it is useful to
define the {\it Jost function\/} on $\bbD$ by
\begin{equation} \lb{5.1}
u(z,J) = \prod_{j=1}^N B(z,p_j) \exp \biggl( \f{1}{4\pi} \int
\f{e^{i\theta}+z}{e^{i\theta}-z}\, \log \biggl(
\f{\rho_\calE(x(e^{i\theta}))}{w(x(e^{i\theta}))}\biggr)
d\theta\biggr)
\end{equation}
and the {\it Jost solution\/}, $u_n(z,J)$, for $n\geq 0$ by (where $a_0\equiv 1$)
\begin{equation} \lb{5.2}
u_n(z,J) = a_n^{-1} B(z)^n u(z,J^{(n)})
\end{equation}
where $J^{(n)}$ is the $n$ times stripped Jacobi matrix, that is, with Jacobi parameters
$\{a_j^{(n)}, b_j^{(n)}\}$ where
\begin{equation} \lb{5.3}
a_j^{(n)} = a_{j+n} \qquad b_j^{(n)} = b_{j+n}
\end{equation}
Notice because of \eqref{2.1} and \eqref{2.2} the product and integral in \eqref{5.1} converge.
Also notice \eqref{5.1} agrees with \eqref{2.3a} given \eqref{3.9}.
For \eqref{5.2} to make sense, we need:

\begin{proposition}\lb{P5.1} If $J$ is Szeg\H{o} for $\calE$, so is $J^{(n)}$.
\end{proposition}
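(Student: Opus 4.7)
The plan is to verify the four conditions making up ``$J$ is Szeg\H{o} for $\calE$''---namely \eqref{2.1a}, \eqref{2.1}, \eqref{2.2}, and \eqref{2.3}---for the once-stripped matrix $J^{(1)}$; the statement for general $n$ then follows by iteration. Three of the four conditions transfer formally; the content is in the Szeg\H{o} integral \eqref{2.2}.

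For the formal parts: viewed on a common space by padding with a zero, $J^{(1)}$ is a rank-two perturbation of $J$, so Weyl's theorem gives $\sigma_\ess(J^{(1)}) = \sigma_\ess(J) = \calE$. For \eqref{2.1}, the $m$-function recursion $m(z) = [b_1 - z - a_1^2 m_1(z)]^{-1}$ (with $m_1$ the $m$-function of $J^{(1)}$) forces the eigenvalues of $J$ (poles of $m$) and of $J^{(1)}$ (zeros of $m$, i.e.\ poles of $m_1$) to strictly interlace within each connected component of $\bbR\setminus\calE$. In the two unbounded components this yields $\dist(E'_k,\calE)\le \dist(E_k,\calE)$ term by term; in each of the $\ell$ bounded gaps, $J^{(1)}$ has at most one eigenvalue more than $J$, each contributing at most $(\tfrac12\,\text{gap length})^{1/2}$ to the sum. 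Hence \eqref{2.1} passes to $J^{(1)}$. Condition \eqref{2.3} is pure arithmetic:
\[
\frac{a_2\cdots a_{n+1}}{C(\calE)^n} \;=\; \frac{C(\calE)}{a_1}\cdot \frac{a_1\cdots a_{n+1}}{C(\calE)^{n+1}},
\]
so its $\limsup$ is positive for $J^{(1)}$ iff for $J$.

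The crux is \eqref{2.2}. Taking imaginary parts of the $m$-function recursion at $z=x+i0$ yields, a.e.\ on $\calE$,
\[
w^{(1)}(x) \;=\; \frac{w(x)}{a_1^{2}\,|m(x+i0)|^{2}}.
\]
Since $\rho_\calE(x)\sim\dist(x,\bbR\setminus\calE)^{-1/2}$, condition \eqref{2.2} is equivalent to $\int_\calE \log w\, d\rho_\calE > -\infty$, and passing to logarithms gives
\[
\int_\calE \log w^{(1)}\, d\rho_\calE \;=\; \int_\calE \log w\, d\rho_\calE \;-\; 2\int_\calE \log|m(x+i0)|\, d\rho_\calE(x) \;-\; 2\log a_1.
\]
The first term is $>-\infty$ by the Szeg\H{o} hypothesis for $J$. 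For the middle term, apply Theorem~\ref{T4.1} to $J$: the bound \eqref{4.2} shows $\log|M(e^{i\theta})|\in L^1(\partial\bbD, d\theta/(2\pi))$, and the push-forward identity underlying Proposition~\ref{P4.2} (extended from bounded to $L^1$ integrands in the standard way) yields
\[
\int_\calE |\log|m(x+i0)||\, d\rho_\calE(x) \;=\; \int_{\partial\bbD} |\log|M(e^{i\theta})||\, \frac{d\theta}{2\pi} \;<\; \infty.
\]
Thus the middle integral is finite and \eqref{2.2} passes to $J^{(1)}$. The main obstacle is precisely this last step; it rests on the full representation in Theorem~\ref{T4.1}, which uses only $\sigma_\ess(J)=\calE$ and not the Szeg\H{o} property, so no circularity arises.
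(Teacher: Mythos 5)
Your proposal is correct, and for three of the four conditions (Weyl's theorem for \eqref{2.1a}, eigenvalue interlacing for \eqref{2.1}, the telescoping identity for \eqref{2.3}) it coincides with the paper's proof. Where you genuinely diverge is the transfer of the Szeg\H{o} condition \eqref{2.2}: the paper disposes of it in one line by applying Theorem~\ref{T2.1} to $J^{(1)}$ --- since \eqref{2.1a}, \eqref{2.1}, and \eqref{2.3} have already been verified for $J^{(1)}$, the implication (ii) $+$ \eqref{2.1} $\Rightarrow$ (i) (which is exactly the paper's new converse direction) yields \eqref{2.2}. You instead give a direct ``one-step sum rule'' argument: the stripping identity $w^{(1)}=w/(a_1^2\abs{m(x+i0)}^2)$ (the same identity, $a_1^2\abs{M}^2=\Ima M/\Ima M_1$, that the paper uses later in the sketch of Theorem~\ref{T5.2}), combined with \eqref{4.2} of Theorem~\ref{T4.1} and the push-forward formula of Proposition~\ref{P4.2} to see that $\int_\calE\abs{\log\abs{m(x+i0)}}\,d\rho_\calE<\infty$. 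Both routes are non-circular --- Theorem~\ref{T2.1} does not depend on Proposition~\ref{P5.1}, and Theorem~\ref{T4.1} indeed assumes only $\sigma_\ess(J)=\calE$ --- but they buy different things: the paper's deduction is essentially free once its main theorem is in place, while yours is more self-contained, uses only the canonical factorization machinery, and in fact shows the sharper statement that \eqref{2.2} alone (with $\sigma_\ess(J)=\calE$) passes to $J^{(1)}$, without invoking \eqref{2.1} or \eqref{2.3}. Two small points to tighten: in a bounded gap where $J$ has infinitely many eigenvalues, the phrase ``at most one eigenvalue more than $J$'' should be replaced by the pairing argument you implicitly intend (order the eigenvalues of $J^{(1)}$ in each half-gap toward the nearby edge; strict interlacing places a distinct eigenvalue of $J$ between consecutive ones, dominating all but at most one term per half-gap); and you should say explicitly that Proposition~\ref{P4.2} extends from ``nice'' $f$ to nonnegative measurable $f$ by monotone convergence, and that \eqref{4.2} guarantees $m(x+i0)$ is finite and nonzero a.e., so the pointwise identity for $\log w^{(1)}$ holds a.e.\ on $\calE$.
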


\begin{proof} It is enough to prove it for $n=1$ and then use induction.
\eqref{2.1a} holds for $J^{(1)}$ by Weyl's theorem and \eqref{2.1} by eigenvalue interlacing.
\eqref{2.3} is trivial for $J^{(1)}$ given it for $J$, and then \eqref{2.2} for $J^{(1)}$
follows from Theorem~\ref{T2.1}.
\end{proof}

Here is the main result about Jost solutions:

\begin{theorem}\lb{T5.2} Let $J$ be Szeg\H{o} for $\calE$. Then {\rm{(}}with $M_n(z)=M(z;J^{(n)})${\rm{)}}
\begin{alignat}{2}
& \text{\rm{(i)}} \qquad && a_{n+1} M_n(z) = B(z)\, \f{u(z, J^{(n+1)})}{u(z,J^{(n)})} \lb{5.4} \\
& \text{\rm{(ii)}} \qquad && a_n M_n(z) = \f{u_{n+1}(z,J)}{u_n (z,J)} \lb{5.5}
\end{alignat}

\begin{SL}
\item[{\rm{(iii)}}] For $z\in\bbD$, $u_n(z,J)$ obeys the difference equation {\rm{(}}$a_0\equiv 1${\rm{)}}
\begin{equation} \lb{5.6}
a_{n-1} u_{n-1} + b_n u_n + a_n u_{n+1} = x(z) u_n
\end{equation}
for $n\geq 1$.

\item[{\rm{(iv)}}] Up to a constant, $u_n (z,J)$ is the unique $\ell^2$ solution of \eqref{5.6}.
\end{SL}
\end{theorem}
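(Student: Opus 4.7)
The plan is to establish (i) first via Theorem~\ref{T4.1} applied to $J^{(n)}$, then derive (ii), (iii), and (iv) from (i) together with standard $m$-function identities and the defining formula $u_n(z,J) = a_n^{-1}B(z)^n u(z,J^{(n)})$.

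For (i), I would apply Theorem~\ref{T4.1} to $J^{(n)}$ -- which is Szeg\H{o} by Proposition~\ref{P5.1} -- to obtain
\[
a_{n+1}M_n(z) = B(z)\,B_\infty^{(n)}(z)\,\exp\!\biggl(\f{1}{2\pi}\int\f{e^{i\theta}+z}{e^{i\theta}-z}\log\abs{a_{n+1}M_n(e^{i\theta})}\,d\theta\biggr).
\]
The task then splits into matching the Blaschke factor and the outer factor against the corresponding ingredients of $B(z)u(z,J^{(n+1)})/u(z,J^{(n)})$. For the Blaschke factor, the standard stripping identity $a_{n+1}^2 m_{n+1}(x) = b_{n+1}-x-1/m_n(x)$ shows that the zeros of $m_n$ in the gaps are exactly the eigenvalues of $J^{(n+1)}$ while its poles are the eigenvalues of $J^{(n)}$, so that $B_\infty^{(n)}(z) = \prod_j B(z,p_j^{(n+1)})/\prod_j B(z,p_j^{(n)})$, matching the Blaschke ratio in $u(z,J^{(n+1)})/u(z,J^{(n)})$. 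For the outer factor, taking imaginary parts of the stripping formula on $\calE+i0$ yields $a_{n+1}^2 w_{n+1}(x) = w_n(x)/\abs{m_n(x+i0)}^2$, hence the boundary modulus identity $\abs{a_{n+1}M_n(e^{i\theta})}^2 = w_n(x(e^{i\theta}))/w_{n+1}(x(e^{i\theta}))$. Substituting into Theorem~\ref{T4.1}'s outer factor, the $\rho_\calE$'s cancel and the result matches the outer ratio in $u(z,J^{(n+1)})/u(z,J^{(n)})$ exactly. Combining factors yields (i).

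Once (i) is in hand, (ii) is an immediate computation: the definition of $u_n$ gives $u_{n+1}/u_n = (a_n/a_{n+1})\,B(z)\,u(z,J^{(n+1)})/u(z,J^{(n)})$, and applying (i) to the last two factors produces $a_nM_n(z)$. Part (iii) then reduces, upon dividing the proposed recursion by $u_n$ and invoking (ii) for both $u_{n+1}/u_n = a_nM_n$ and $u_{n-1}/u_n = 1/(a_{n-1}M_{n-1})$, to the algebraic identity $1/M_{n-1}(z) + b_n + a_n^2 M_n(z) = x(z)$, which is exactly the stripping formula (accounting for $M=-m$) at index $n-1$.

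For (iv), I would use \eqref{3.9} to write $\abs{u_n(z,J)} = a_n^{-1}\exp(-nG_\calE(x(z)))\,\abs{u(z,J^{(n)})}$. For $z \in \bbD$ with $x(z) \notin \sigma(J)$ the exponential factor decays geometrically. By Theorem~\ref{T2.2}, $J^{(n)} \to J_\infty \in \calT_\calE$, so $a_n$ is uniformly bounded above and below and $\abs{u(z,J^{(n)})}$ stays bounded in $n$ for each fixed $z$ (the Blaschke part has modulus $\le 1$, and the outer factor's exponent is uniformly controlled as $w_n$ approaches the limiting density). Hence $u_n \in \ell^2$. Uniqueness is classical: the two-dimensional solution space of \eqref{5.6} contains a one-dimensional Weyl subspace of $\ell^2$ solutions at any $x(z) \notin \sigma(J)$, so $u_n$ spans it up to a scalar.

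The principal obstacle is part (i), specifically confirming that $M_n$ has no additional zeros or poles in $\ol{\bbF}$ beyond those inventoried (so that $B_\infty^{(n)}$ really has the claimed factorization) and promoting the a.e.\ boundary identity $\abs{a_{n+1}M_n}^2 = w_n/w_{n+1}$ to an equality of outer functions. Handling the band edges $\{\alpha_j,\beta_j\}$ and possible zeros of $m_n(x+i0)$ on $\calE$ will require additional care.
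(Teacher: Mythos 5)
Your handling of (i)--(iii) is essentially the paper's own route: (i) comes from applying Theorem~\ref{T4.1} to $J^{(n)}$ (legitimate by Proposition~\ref{P5.1}), matching the Blaschke factor through the zero/pole inventory of $M_n$ under coefficient stripping and the outer factor through the boundary identity $\abs{a_{n+1}M_n(e^{i\theta})}^2=w_n/w_{n+1}$ --- which is precisely the paper's identity $a_1^2\abs{M(e^{i\theta})}^2=\Ima M(e^{i\theta})/\Ima M_1(e^{i\theta})$ --- and (ii), (iii) then follow from the definition \eqref{5.2} and the stripping formula \eqref{5.7}, exactly as in the paper.

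In (iv), however, there is a genuine gap. The paper's argument is to prove uniform-in-$n$ bounds on $a_n^{-1}$ and on $u(z,J^{(n)})$ directly and then use $\abs{B(z)}<1$ to get geometric decay of $u_n$. You instead import these bounds from Theorem~\ref{T2.2}. That is doubly problematic. First, Theorem~\ref{T2.2} is the deep asymptotic result toward which the Jost machinery of this section is being built (the limit point is identified via the character of the Jost function, Corollary~\ref{C6.2}), so invoking it inside the proof of Theorem~\ref{T5.2} risks circularity and in any case inverts the logical order of the development. Second, even granting \eqref{2.4}, convergence of the Jacobi parameters $J^{(n)}\to J_\infty$ yields only weak convergence of spectral measures; it gives no pointwise or Poisson-integral control of $\log\bigl(\rho_\calE(x)/w_n(x)\bigr)$, so your assertion that the outer factor of $u(z,J^{(n)})$ is ``uniformly controlled as $w_n$ approaches the limiting density'' is unsupported: an upper bound on $\abs{u(z,J^{(n)})}$ amounts to a uniform lower bound on the Szeg\H{o} integrals of the stripped matrices, which must be extracted quantitatively (e.g., from step-by-step sum rules applied to $J^{(n)}$, together with \eqref{2.1} and \eqref{2.3}), not from \eqref{2.4}. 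Similarly, the bound on $a_n^{-1}$ should be obtained directly (for instance by evaluating \eqref{5.4} at $z=0$ and using the uniform control of $u(0,J^{(n)})$), not from Theorem~\ref{T2.2}. Note that only subexponential growth in $n$ of $a_n^{-1}\abs{u(z,J^{(n)})}$ is actually needed for $\ell^2$, but some genuine uniform-in-$n$ estimate of this kind has to be supplied; the Weyl-theory uniqueness part is fine as you state it.
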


\begin{proof}[Sketch] 1. (i) is just a restatement of \eqref{4.3} using the fact that
\begin{align}
a_1^2|M(e^{i\theta})|^2 = \f{\Ima M(e^{i\theta})}{\Ima
M_1(e^{i\theta})}
\end{align}

\smallskip
2. (ii) follows from (i) and the definition \eqref{5.2}.

\smallskip
3. \eqref{5.6} follows from \eqref{5.5} and the coefficient stripping formula for $M$, namely,
\begin{equation} \lb{5.7}
M_n(z)^{-1} = x(z)-b_{n+1} - a_{n+1}^2 M_{n+1}(z)
\end{equation}

\smallskip
4. One proves uniform bounds on $a_n^{-1}$ and $u(z,J^{(n)})$. Since $\abs{B(z)}<1$ on $\bbD$,
$u_n$ goes to zero exponentially and so lies in $\ell^2$. Uniqueness is standard.
\end{proof}

In \cite{CSZ1,CSZ2}, we study boundary values of $u$ as $z\to\partial\bbD$, Green's functions,
and related objects.

\section{Character Automorphic Functions and Asymptotics} \lb{s6}

The key fact in Theorem~\ref{T2.2} is the existence of the limit
point in $\calT_\calE$. The Jost function actually determines the
limit point. To explain how, we need to discuss character
automorphic functions.

If $\gamma$ is a M\"obius transformation of $\bbD$ to $\bbD$ and $b$ is given by \eqref{3.8},
then $h(z)=b(\gamma(z),\gamma(w))$ has magnitude $1$ on $\partial\bbD$ and a zero only at
$z=w$, so $\abs{h(z)}=\abs{b(z,w)}$, but there is generally a nontrivial phase factor
(necessarily constant by analyticity). This implies that for any $w\in\bbD$,
\begin{equation} \lb{6.1}
B(\gamma(z),w) = C_w(\gamma) B(z,w)
\end{equation}
where $\abs{C_w(\gamma)} =1$. Clearly, $C_w(\gamma\gamma') =  C_w(\gamma) C_w(\gamma')$, so
$C_w$ is a character of $\Gamma$, that is, a group homomorphism of $\Gamma$ to $\partial\bbD$.

The set $\Gamma^*$ of such homomorphisms is the dual group of
$\Gamma/[\Gamma,\Gamma]\cong \bbZ^\ell$, so
$\Gamma^*\cong(\partial\bbD)^\ell$ (cf.\ \cite[Chap.~{III}]{SXII}).
Essentially, $C$ is uniquely determined by $C (\gamma_j^+)$,
$j=1,\dots, \ell$.

A meromorphic function on $\bbD$ obeying
\[
f(\gamma(z)) = C(\gamma) f(z)
\]
for all $z\in\bbD$ and $\gamma\in\Gamma$ is called {\it character
automorphic}. \eqref{6.1} says Blaschke products are character
automorphic. One can also see that if $g$ is a real-valued function
on $\calE$, then
\begin{equation} \lb{6.2}
f(z)=\exp\biggl( \int \f{e^{i\theta}+z}{e^{i\theta}-z}\, \log (g(x(e^{i\theta}))\,
\f{d\theta}{2\pi}\biggr)
\end{equation}
is character automorphic, so the Jost function \eqref{5.1} is a product of character
automorphic functions, and so character automorphic. That is, there is a $C_J\in\Gamma^*$
associated with any Szeg\H{o} $J$ via
\begin{equation} \lb{6.3}
u(\gamma(z), J) = C_J(\gamma) u(z,J)
\end{equation}

If $C_0$ is the character associated to the fundamental Blaschke product, $B(z)$, \eqref{5.4}
and the fact that $M$ is automorphic implies
\begin{equation} \lb{6.4}
C_{J^{(n+1)}} = C_{J^{(n)}} C_0^{-1}
\end{equation}
and so
\begin{equation} \lb{6.5}
C_{J^{(n)}} = C_J C_0^{-n}
\end{equation}

A fundamental fact about the map $C$ (discussed in \cite{CSZ1}) is that

\begin{theorem}\lb{T6.1} The map $J\to C_J$ for $J$'s in $\calT_\calE$, from $\calT_\calE$ to $\Gamma^*$,
is a homeomorphism.
\end{theorem}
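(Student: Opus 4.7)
Both $\calT_\calE$ (topologized via its embedding in the space of Jacobi parameters) and $\Gamma^*\cong(\partial\bbD)^\ell$ are compact connected Hausdorff spaces; the $\ell$-torus structure on $\calT_\calE$ is established in \cite{CSZ1}. To prove the map is a homeomorphism it suffices to verify it is a continuous bijection. Given continuity and injectivity, invariance of domain forces the map to be open, its image is automatically compact hence closed, and connectedness of $\Gamma^*$ then yields surjectivity.

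\textbf{Continuity.} For $J\in\calT_\calE$, $N=0$, so \eqref{5.1} reduces to the outer factor built from the a.c.\ weight $w_J$. If $J_k\to J$ in Jacobi-parameter topology, the spectral measures converge and $w_{J_k}\to w_J$ in a mode compatible with the Poisson integral in \eqref{5.1} (by Proposition~\ref{P4.2} and the $L^p$-bounds of Theorem~\ref{T4.1}). Hence $u(z,J_k)\to u(z,J)$ locally uniformly on $\bbD$; evaluating at $\gamma_j^{\pm}(z_0)$ for any base point $z_0\in\bbF\cap\bbC^+$ not a zero/pole gives $C_{J_k}(\gamma_j^{\pm})\to C_J(\gamma_j^{\pm})$, i.e.\ $C_{J_k}\to C_J$ in $\Gamma^*$.

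\textbf{Injectivity.} Suppose $C_J=C_{J'}$ with $J,J'\in\calT_\calE$. By \eqref{6.5} we also have $C_{J^{(n)}}=C_{(J')^{(n)}}$ for every $n\ge 0$, so it suffices to show $u(\cdot,J)=u(\cdot,J')$ (then \eqref{5.4} gives $M(\cdot;J)=M(\cdot;J')$, and coefficient stripping \eqref{5.7} recovers the Jacobi parameters). Form $F(z):=u(z,J)/u(z,J')$. This is $\Gamma$-automorphic (characters cancel), analytic and nonvanishing on $\bbD$, with boundary modulus $|F(e^{i\theta})|^2=w_{J'}(x(e^{i\theta}))/w_J(x(e^{i\theta}))$ (from \eqref{5.1}). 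By the character-automorphic Hardy space theory of \cite{Widom,SY}, each $u(\cdot,K)$ for $K\in\calT_\calE$ is an outer element of $H^2(\bbD/\Gamma)$ uniquely determined within the Jost-function family of $\calT_\calE$ by its character together with the positivity normalization $u(0,K)>0$ coming from \eqref{2.3a}; this rigidity within the $\ell$-parameter family (the core content imported from \cite{CSZ1}) forces $F\equiv 1$, so $u(\cdot,J)=u(\cdot,J')$ and $J=J'$.

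\textbf{Main obstacle.} The decisive step is the rigidity statement just invoked: that the Jost function of a torus point is determined, among character-automorphic outer $H^2$ functions, by its character and positivity at $0$. This matches the $\ell$-parameter count on both sides but requires honest Hardy-space analysis on $\bbD/\Gamma$, with particular care at the preimages of the $2(\ell+1)$ endpoints $\{\alpha_j,\beta_j\}$, where $w_J/\rho_\calE$ has at worst an integrable logarithmic singularity controlled by \eqref{2.2}, so that the outer/inner factorization is legitimate. Should invariance of domain prove cumbersome in the chosen parameter topology, a fallback for surjectivity is available via dynamics: \eqref{6.5} intertwines the shift $J\mapsto J^{(1)}$ with $C\mapsto CC_0^{-1}$, and in the generic case of rationally independent harmonic measures the latter acts minimally on $\Gamma^*$, forcing the image to be dense (and, by compactness, equal to) $\Gamma^*$.
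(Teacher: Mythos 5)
First, note that the paper you are being compared against does not actually prove Theorem~\ref{T6.1}: it is stated as a ``fundamental fact about the map $C$ (discussed in \cite{CSZ1})'', i.e.\ the proof is deferred to the companion paper on the isospectral torus. So there is no in-paper argument to match; your proposal has to stand on its own, and as written it has two genuine gaps. The first is the assertion that $N=0$ for $J\in\calT_\calE$. That is false: points of the isospectral torus are reflectionless half-line Jacobi matrices which generically carry one eigenvalue in each of the $\ell$ gaps (this is exactly why \eqref{2.3a} and \eqref{5.1} keep the factors $\exp(-G_\calE(E_j))$, resp.\ $B(z,p_j)$, even for the limit point $J_\infty\in\calT_\calE$ in \eqref{2.5}). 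Consequently $u(\cdot,J)$ for a torus point is not just the outer Szeg\H{o} factor, the quotient $F=u(\cdot,J)/u(\cdot,J')$ is not zero-free, and both your continuity and injectivity arguments break as stated. Continuity in particular must track the divisor (the gap eigenvalues and which ``sheet'' they sit on), and is delicate precisely when an eigenvalue is absorbed into a band edge; moreover, convergence of Jacobi parameters only gives weak-$*$ convergence of spectral measures, which does not by itself control the Poisson integral of $\log(w/\rho_\calE)$ in \eqref{5.1} (the Szeg\H{o} integrand is only semicontinuous under weak limits), so ``$w_{J_k}\to w_J$ in a mode compatible with the Poisson integral'' needs the explicit structure of torus weights, which is again content of \cite{CSZ1}.

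The second gap is that your injectivity step rests entirely on the ``rigidity'' claim that a torus Jost function is determined by its character plus positivity at $0$, which you yourself label as imported from \cite{CSZ1}. But that rigidity (together with the matching existence statement) \emph{is} Theorem~\ref{T6.1} in only slightly disguised form---it is the uniqueness half of the Jacobi inversion problem---so invoking it is circular in this setting. The established routes are either Abel's theorem/Jacobi inversion on the underlying hyperelliptic curve (identifying $J\mapsto C_J$ with the Abel map of the divisor, which is bijective onto the real torus), or the Sodin--Yuditskii character-automorphic Hardy space functional model, where for each $\alpha\in\Gamma^*$ one \emph{constructs} the unique reflectionless $J$ with $C_J=\alpha$; in both, injectivity and surjectivity come from the same construction, so no invariance-of-domain detour is needed. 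Your topological reduction (continuous bijection from a compact $\ell$-torus, or continuity plus injectivity plus invariance of domain and connectedness of $\Gamma^*$) is sound as topology, but your fallback for surjectivity via minimality of translation by $C_0$ fails exactly in the periodic case, where the harmonic measures are rational and $C_0$ has finite order, so it cannot cover arbitrary finite gap sets.
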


\begin{corollary}\lb{C6.2} Suppose $J$ is Szeg\H{o} and $J_\infty\in \calT_\calE$ obeys \eqref{2.4}.
Then $J_\infty$ is the unique point in $\calT_\calE$ obeying
\begin{equation} \lb{6.6}
C_{J_\infty} = C_J
\end{equation}
\end{corollary}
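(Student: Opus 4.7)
The plan is to prove the corollary in two steps: uniqueness from the homeomorphism property of Theorem~\ref{T6.1}, and the equality $C_{J_\infty}=C_J$ by exhibiting a character-automorphic ``defect'' function whose character is independent of $n$ and then forcing it to be trivial by a compactness argument in $\calT_\calE$. Uniqueness is immediate: Theorem~\ref{T6.1} makes $J\mapsto C_J$ a homeomorphism, in particular injective, so at most one point of $\calT_\calE$ can carry the character $C_J$.

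For the identification, note first that every element of $\calT_\calE$ is Szeg\H{o} for $\calE$ (its spectral measure is purely absolutely continuous with weight $\rho_\calE$, and the integrals \eqref{2.2} and \eqref{5.1} converge because $\rho_\calE(x)\sim\dist(x,\bbR\setminus\calE)^{-1/2}$), so $u(z,J_\infty^{(n)})$ is well-defined, and \eqref{6.5} applies to both $J$ and $J_\infty$:
\[
C_{J^{(n)}}=C_J\,C_0^{-n},\qquad C_{J_\infty^{(n)}}=C_{J_\infty}\,C_0^{-n}.
\]
Set $F_n(z):=u(z,J^{(n)})/u(z,J_\infty^{(n)})$; by multiplicativity of characters, $F_n$ is character automorphic with the $n$-independent character $C_{F_n}=C_JC_{J_\infty}^{-1}$. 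Hence it suffices to find a subsequence $n_k$ and a base point $z_0\in\bbD$ with $F_{n_k}(z_0)\to 1$ and $F_{n_k}(\gamma(z_0))\to 1$ for each generator $\gamma\in\Gamma$, because then $C_{F_{n_k}}(\gamma)=F_{n_k}(\gamma(z_0))/F_{n_k}(z_0)\to 1$ and constancy in $n$ forces $C_JC_{J_\infty}^{-1}\equiv 1$, i.e.\ \eqref{6.6}.

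To produce such a subsequence I would use compactness of the $\ell$-torus $\calT_\calE$ to pick $n_k$ with $J_\infty^{(n_k)}\to J^*\in\calT_\calE$. The hypothesis \eqref{2.4} supplies $\sup_{m\ge n}\bigl(|a_m-a_m^{(\infty)}|+|b_m-b_m^{(\infty)}|\bigr)\to 0$, so the Jacobi parameters of $J^{(n_k)}$ have the same pointwise limit $J^*$. The remaining task is to argue that both $u(z,J^{(n_k)})$ and $u(z,J_\infty^{(n_k)})$ converge, locally uniformly on $\bbD$, to $u(z,J^*)$; this forces $F_{n_k}\to 1$ on compact subsets of $\bbD$ (away from the poles of the ratio).

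The main obstacle is precisely this continuity of the Jost function \eqref{5.1} under pointwise $\ell^\infty$ convergence of Szeg\H{o} Jacobi matrices to a point of $\calT_\calE$. One must show that the Blaschke product over the eigenvalues $p_j^{(n_k)}$ of $J^{(n_k)}$ tends to $1$ (using that eigenvalues outside $\calE$ migrate to $\partial\calE$ in the limit because $J^*\in\calT_\calE$ has none, controlled uniformly through \eqref{2.1}), and that the Szeg\H{o} exponential converges, which requires $\log(w^{(n_k)}/\rho_\calE)$ to converge inside the Poisson kernel. Both are consequences of continuous dependence of spectral data on Jacobi parameters together with uniform control of the Szeg\H{o} and Lieb--Thirring-type quantities in \eqref{2.1}--\eqref{2.2} that is preserved under stripping by Proposition~\ref{P5.1}. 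This is the step I expect to occupy the bulk of the detailed proof in \cite{CSZ2}; once it is available, the argument closes exactly as sketched.
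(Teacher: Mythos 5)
Your skeleton agrees with the paper's: uniqueness is exactly the injectivity half of Theorem~\ref{T6.1}, and the identification rests on the observation that, by \eqref{6.5}, the character of $u(z,J^{(n)})/u(z,J_\infty^{(n)})$ does not depend on $n$, so it suffices to make this ratio tend to $1$ at a point and at its $\Gamma$-translates. Where you diverge is in how that convergence is obtained. The paper gets it directly from \eqref{2.6} (equivalently \eqref{2.5}), i.e.\ from the Szeg\H{o} asymptotics already established in Theorem~\ref{T2.2}, whose conclusions are at hand since the corollary's hypothesis is \eqref{2.4}; the remark after Theorem~\ref{T2.2} notes that the limit in \eqref{2.6} is expressed through the Jost function, and the paper's argument is then the short chain: \eqref{2.6} gives $u(z,J^{(n)})/u(z,J_\infty^{(n)})\to 1$ away from $x^{-1}(\bbR)$, hence $C_{J^{(n)}}/C_{J_\infty^{(n)}}\to 1$, hence by \eqref{6.5} the ratio of characters is identically $1$.

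You instead try to derive the Jost-ratio convergence from \eqref{2.4} alone, via compactness of $\calT_\calE$ and a continuity claim: that $u(\cdot,J^{(n_k)})$ and $u(\cdot,J_\infty^{(n_k)})$ both converge to $u(\cdot,J^*)$ when the Jacobi parameters converge pointwise to $J^*\in\calT_\calE$. That claim carries the entire analytic weight and you leave it unproved; it is not a routine continuity statement. Pointwise (even tail-uniform) convergence of bounded Jacobi parameters gives only weak convergence of the spectral measures, whereas the Jost function \eqref{5.1} involves the entropy-type integral $\int\log(w/\rho_\calE)\,d\rho_\calE$ and a Blaschke product over the eigenvalues, neither of which is continuous under weak convergence; one needs semicontinuity arguments combined with sum rules, and uniform control of \eqref{2.1}--\eqref{2.2} under stripping (Proposition~\ref{P5.1}) does not by itself supply this. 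Carrying that out is essentially tantamount to reproving the Szeg\H{o} asymptotics of Theorem~\ref{T2.2}, so as written your central step is a genuine gap --- one that could be closed at once by citing \eqref{2.6} as the paper does, or only with substantial extra work along the lines you indicate. A smaller inaccuracy: points of $\calT_\calE$ do not have purely absolutely continuous spectral measure with weight $\rho_\calE$ (already for $\calE=[-2,2]$ the weight is $\tfrac{1}{2\pi}\sqrt{4-x^2}\neq\rho_\calE$, and in general there may be point masses in the gaps); what is true, and all you need, is that every point of $\calT_\calE$ satisfies \eqref{2.1} and \eqref{2.2} and hence is Szeg\H{o} for $\calE$.
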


\begin{proof}[Sketch] \eqref{2.6} implies that $u(z,J^{(n)})/u(z,J_\infty^{(n)})\to 1$ at points
away from $x^{-1}(\bbR)$ (where it might be $0$), which implies $C_{J^{(n)}}/C_{J_\infty^{(n)}}\to 1$
which, by \eqref{6.5}, implies $C_J/C_{J_\infty}\equiv 1$. Uniqueness follows from the
theorem.
\end{proof}

We have a scheme for proving the convergence result \eqref{2.4} which we hope to implement in
the final version of \cite{CSZ2}. Because it shows a heretofor unknown connection between
Szeg\H{o} behavior and Rakhmanov's theorem, we want to describe the idea.

What can be called the Denisov--Rakhmanov--Remling theorem---namely, a corollary that Remling
\cite{Remppt} gets of his main theorem that extends the theorem of Denisov--Rakhmanov \cite{Denpams}
and Damanik--Killip--Simon \cite{DKS2007} to general finite gap sets---says that any right limit
of a $J$ with $\sigma_\ess(J) =\Sigma_\ac(J) = \calE$ ($\Sigma_\ac$ is the essential support
of the a.c.\ spectrum) lies in $\calT_\calE$. A direct proof of \eqref{6.6} would determine a
unique orbit in $\calT_\calE$ (orbit under coefficient stripping) to which the orbit of $J$ is
asymptotic, and so prove \eqref{2.4}.

We have a proof (whose details need to be checked) that implements this idea and we hope to
use it to get a totally new proof of Theorem~\ref{T2.2} that does not use variational principles.

For now, our proof of Theorem~\ref{T2.2} in \cite{CSZ2}, following Widom \cite{Widom}, uses
the Szeg\H{o} variational approach \cite{Sz20}. In essence, Szeg\H{o} shows $z^n P_n (z+\f{1}{z})$
has a limit $D(0)D(z)^{-1}$ minimizing an $L^2$-norm, subject to taking the value $1$ at $z=0$.
In our case, $B(z)^n P_n (x(z))$ is only character automorphic with an $n$-dependent
character (namely $C_0^n$), so it does not have a fixed limit. Rather, it minimizes an $L^2$-norm
among character automorphic functions (with a fixed but $n$-dependent character)---which explains
why the limiting behavior is only almost periodic.

\bigskip

\end{document}